\providecommand{\U}[1]{\protect\rule{.1in}{.1in}}
\newtheorem{theorem}{Theorem}
\newtheorem{lemma}{Lemma}
\newtheorem{proposition}{Proposition}
\newtheorem{remark}{Remark}
\newenvironment{proof}[1][Proof]{\textbf{#1.} }{\ \rule{0.5em}{0.5em}}
\begin{document}

\title{Sharp Adaptive Nonparametric Testing for Sobolev Ellipsoids}
\author{\textsc{Pengsheng Ji}\textsc{\bigskip\ and Michael Nussbaum}\\\textit{University of Georgia and Cornell University }}
\date{}
\maketitle

\begin{abstract}
We consider testing for presence of a signal in Gaussian white noise with
intensity $n^{-1/2}$, when the alternatives are given by smoothness ellipsoids
with an $L_{2}$-ball of (squared) radius $\rho$ removed. It is known that, for
a fixed Sobolev type ellipsoid $\Sigma(\beta,M)$ of smoothness $\beta$ and
size $M$, a squared radius $\rho\asymp n^{-4\beta/(4\beta+1)}$ is the critical
separation rate, in the sense that the minimax error of second kind over
$\alpha$-tests stays asymptotically between $0$ and $1$ strictly (Ingster
\cite{Ingst-82}). In addition, Ermakov \cite{Ermak-90} found the sharp
asymptotics of the minimax error of second kind at the separation rate. For
adaptation over both $\beta$ and $M$ in that context, it is known that a
$\log\log$-penalty over the separation rate for $\rho$ is necessary for a
nonzero asymptotic power. Here, following an example in nonparametric
estimation related to the Pinsker constant, we investigate the adaptation
problem over the ellipsoid size $M$ only, for fixed smoothness degree $\beta$.
It is established that the sharp risk asymptotics can be replicated in that
adaptive setting, if $\rho\rightarrow0$ more slowly than the separation rate.
The penalty for adaptation here turns out to be a sequence tending to infinity
arbitrarily slowly.

\end{abstract}

\begin{table}[b]
\rule{3cm}{0.01cm} \newline
\par
{\small Supported in part by NSF\ Grant DMS-08-05632
\newline}
{\small 2000 Mathematics Subject Classification: 62G10, 62G20 } \newline%
{\small \textit{Key words and phrases:} minimax hypothesis testing,
nonparametric signal detection, sharp asymptotic adaptivity, moderate
deviation }\end{table}

\section{Introduction and main result\label{sec:intro-and-main}}

Consider the Gaussian white noise model in sequence space, where observations
are
\begin{equation}
Y_{j}=f_{j}+n^{-1/2}\xi_{j},\qquad j=1,2,..., \label{gwn-1}%
\end{equation}
with unknown, nonrandom signal $f=(f_{j})_{j=1}^{\infty}$, and noise variables
$\xi_{j}$ which are i.i.d. $N(0,1)$. We intend to test the null hypothesis of
\textquotedblleft no signal\textquotedblright\ against nonparametric
alternatives described as follows. For some $\beta>0$ and $M>0$, let
$\Sigma(\beta,M)$ be the set of sequences
\[
\Sigma(\beta,M)=\{f=(f_{j})_{j=1}^{\infty}:\sum_{j=1}^{\infty}j^{2\beta}%
f_{j}^{2}\leq M\};
\]
this might be called a Sobolev type ellipsoid with smoothness parameter
$\beta$ and size parameter $M$. Consider further the complement of an open
ball in the sequence space $l_{2}$: if $\left\Vert f\right\Vert _{2}^{2}%
=\sum_{j=1}^{\infty}f_{j}^{2}$ is the squared norm then
\[
B_{\rho}=\{f\in l_{2}:\left\Vert f\right\Vert _{2}^{2}\geq\rho\}.
\]
Here $\rho^{1/2}$ is the radius of the open ball; by an abuse of language we
call $\rho$ itself the \textquotedblleft radius\textquotedblright. We study
the hypothesis testing problem
\[
H_{0}:f=0\quad\text{ against }\quad H_{a}:f\in\Sigma(\beta,M)\cap B_{\rho}.
\]
Assuming that $n\rightarrow\infty$, implying that the noise size $n^{-1/2}$
tends to zero, we expect that for a fixed radius $\rho$, consistent $\alpha
$-testing in that setting is possible. More precisely, there exist $\alpha
$-tests with type II error tending to zero uniformly over the nonparametric
alternative $f\in\Sigma(\beta,M)\cap B_{\rho}$. If now the radius $\rho
=\rho_{n}$ tends to zero as $n\rightarrow\infty$, the problem becomes more
difficult and if $\rho_{n}\rightarrow0$ too quickly, all $\alpha$-tests will
have the trivial asymptotic (worst case) power $\alpha$. According to a
fundamental result of Ingster \cite{Ingst-82} there is a critical rate for
$\rho_{n}$, the so-called \textit{separation rate}%
\begin{equation}
\rho_{n}\asymp n^{-4\beta/(4\beta+1)} \label{separ-rate}%
\end{equation}
at which the transition in the power behaviour occurs. More precisely,
consider a (possibly randomized) $\alpha$-test $\phi_{n}$ in the model
(\ref{gwn-1}) for null hypothesis $H_{0}:f=0$, that is, a test fulfilling
$E_{n,0}\phi_{n}\leq\alpha$ where $E_{n,f}\left(  \cdot\right)  $ denotes
expectation in the model (\ref{gwn-1}).
For given $\phi_{n}$, we define the worst case type II error over the
alternative $f\in\Sigma(\beta,M)\cap B_{\rho}$ as
\begin{equation}
\Psi(\phi_{n},\rho,\beta,M):=\sup_{f\in\Sigma(\beta,M)\cap B_{\rho}}\left(
1-E_{n,f}\phi_{n}\right)  . \label{worst-case-type-II-error-def}%
\end{equation}
The search for a best $\alpha$-test in this sense leads to the minimax type II
error
\begin{equation}
\pi_{n}(\alpha,\rho,\beta,M):=\inf_{\phi_{n}:E_{n,0}\phi_{n}\leq\alpha}%
\Psi(\phi_{n},\rho,\beta,M). \label{minimax-type-II-error-def}%
\end{equation}
An $\alpha$-test which attains the infimum above for a given $n$ is minimax
with respect to type II error. Ingster's separation rate result can now be
formulated as follows: if $\rho_{n}\asymp n^{-4\beta/(4\beta+1)}$ and
$0<\alpha<1$ then
\[
0<\liminf_{n}\pi_{n}(\alpha,\rho_{n},\beta,M)\text{ and }\limsup_{n}\pi
_{n}(\alpha,\rho_{n},\beta,M)<1-\alpha.
\]
Moreover, if $\rho_{n}\gg n^{-4\beta/(4\beta+1)}$ then $\pi_{n}(\alpha
,\rho_{n},\beta,M)\rightarrow0$, and if $\rho_{n}\ll n^{-4\beta/(4\beta+1)}$
then $\pi_{n}(\alpha,\rho_{n},\beta,M)\rightarrow1-\alpha$.

These minimax rates in nonparametric testing, presented here in the simplest
case of an $l_{2}$-setting, have been extended in two ways. In the first of
these, Ermakov \cite{Ermak-90} found the exact asymptotics of the minimax type
II error $\pi_{n}(\alpha,\rho,\beta,M)$ (equivalently, of the maximin power)
at the separation rate. The shape of that result and its derivation from an
underlying Bayes-minimax theorem on ellipsoids exhibit an analogy to the
Pinsker constant in nonparametric estimation. In another direction, Spokoiny
\cite{spokoi-96} considered the adaptive version of the minimax nonparametric
testing problem, where both $\beta$ and $M$ are unknown, and showed that the
rate at which $\rho_{n}\rightarrow0$ has to be slowed down by a $\log\log
n$-factor if nontrivial asymptotic power is to be achieved. Thus an
\textquotedblleft adaptive minimax rate\textquotedblright\ was specified,
analogous to Ingster's nonadaptive separation rate (\ref{separ-rate}), where
the additional $\log\log n$-factor is interpreted as a penalty for adaptation.
However this result did not involve a sharp asymptotics of type II error in
the sense of \cite{Ermak-90}.

It is noteworthy that in nonparametric estimation over $f\in\Sigma(\beta,M)$
with $l_{2}$-loss (as opposed to testing), where the risk asymptotics is given
by the Pinsker constant, there is a multitude of results showing that
adaptation is possible with neither a penalty in the rate nor in the constant,
cf. Efromovich and Pinsker \cite{Efrom-Pinsk-learning}, Golubev
\cite{Golub-87}, \cite{gol-90-quasi}, Tsybakov \cite{Tsyb-book}. The present
paper deals with the question of whether the sharp risk asymptotics for
testing in the sense of \cite{Ermak-90} can be reproduced in an adaptive
setting, in the context of a possible rate penalty for adaptation.

Let us present the well known result on sharp risk asymptotics for testing in
the nonadaptive setting. Let $\Phi$ be the distribution function of the
standard normal, and for $\alpha\in\left(  0,1\right)  $ let $z_{\alpha}$ be
the upper $\alpha$-quantile, such that $\Phi(z_{\alpha})=1-\alpha$. Write
$a_{n}\gg b_{n}$ (or $b_{n}\ll a_{n}$) iff $b_{n}=o(a_{n})$, and $a_{n}\sim
b_{n}$ iff $\lim_{n}a_{n}/b_{n}=1$.

\begin{proposition}
\label{propos-ermak-90}(Ermakov \cite{Ermak-90}) Suppose $\alpha\in(0,1)$ and
that the radius $\rho_{n}$ tends to zero at the separation rate, more
precisely
\[
\rho_{n}\sim c\cdot n^{-4\beta/(4\beta+1)}%
\]
for some constant $c>0$. \newline(i) For any sequence of tests $\phi_{n}$
satisfying $E_{n,0}\phi_{n}\leq\alpha+o(1)$ we have
\[
\Psi(\phi_{n},\rho_{n},\beta,M)\geq\Phi(z_{\alpha}-\sqrt{A(c,\beta
,M)/2})+o(1)\text{ as }n\rightarrow\infty,
\]
where%
\begin{equation}
A(c,\beta,M)=A_{0}(\beta)M^{-1/(2\beta)}c^{2+1/2\beta} \label{A-c-beta-M-def}%
\end{equation}
and $A_{0}(\beta)$ is Ermakov's constant
\begin{equation}
A_{0}(\beta)=\frac{2(2\beta+1)}{(4\beta+1)^{1+1/2\beta}}. \label{A-0-beta-def}%
\end{equation}
\newline(ii) For every $M>0$ there exists a sequence of tests $\phi_{n}$
satisfying $E_{n,0}\phi_{n}\leq\alpha+o(1)$ such that
\[
\Psi(\phi_{n},\rho_{n},\beta,M)\leq\Phi(z_{\alpha}-\sqrt{A(c,\beta
,M)/2})+o(1).
\]

\end{proposition}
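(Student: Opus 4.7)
The proof splits into matching upper and lower bounds that both converge to $\Phi(z_\alpha - \sqrt{A(c,\beta,M)/2})$. The common ingredient is a Pinsker-type extremal profile
\[
v_j = a_n\bigl(1-(j/N_n)^{2\beta}\bigr)_+, \qquad j=1,\ldots,N_n,
\]
with $N_n = \lfloor((4\beta+1)M/\rho_n)^{1/(2\beta)}\rfloor$ and $a_n$ fixed by $\sum_j v_j = \rho_n$. An integral approximation forces $\sum_j j^{2\beta}v_j = M + o(1)$ and, after a direct calculation exploiting $\rho_n \sim c n^{-4\beta/(4\beta+1)}$,
\[
n^2\sum_j v_j^2 \;\longrightarrow\; A_0(\beta)\,M^{-1/(2\beta)}\,c^{2+1/(2\beta)} \;=\; A(c,\beta,M).
\]
A key by-product is $\max_j nv_j = na_n \asymp n^{-1/(4\beta+1)} \to 0$, which underwrites all Gaussian approximations that follow.

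For part (ii) I would take the weighted chi-square statistic
\[
T_n = \sum_{j=1}^{N_n} v_j\bigl(nY_j^2 - 1\bigr),
\]
rejecting when $T_n > z_\alpha\sqrt{2\sum_j v_j^2}$. Under $H_0$ the summands are independent centred $\chi_1^2$ variables with $\mathrm{Var}_0 T_n = 2\sum v_j^2$; the Lindeberg ratio $\max_j v_j^2/\sum_k v_k^2 \sim N_n^{-1}\to 0$ delivers level $\alpha + o(1)$. For any $f\in\Sigma(\beta,M)\cap B_{\rho_n}$, $E_f T_n = n\sum_j v_j f_j^2$ and $\mathrm{Var}_f T_n = 2\sum v_j^2 + 4n\sum v_j^2 f_j^2 = (2+o(1))\sum v_j^2$, the latter because $n\sum v_j^2 f_j^2 \leq na_n^2\rho_n = o(\sum v_j^2)$ by the rate computation. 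A uniform CLT for weighted quadratic forms then yields power $\Phi\bigl(n\sum v_j f_j^2/(2\sum v_j^2)^{1/2}-z_\alpha\bigr)+o(1)$. The adversary's problem $\min \sum v_j g_j$ over $g_j\geq 0$, $\sum g_j\geq\rho_n$, $\sum j^{2\beta}g_j\leq M$ is a linear program; by Lagrangian duality $v_j = \lambda_1 - \lambda_2 j^{2\beta}$ on the support, and $g_j = v_j$ is admissible and optimal. This gives worst-case signal $\sqrt{n^2\sum v_j^2/2}\to \sqrt{A/2}$ and the claimed upper bound.

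For part (i) the Bayes--minimax inequality reduces the task to exhibiting a prior supported on $\Sigma(\beta,M)\cap B_{\rho_n}$ whose Bayes type II error is $\Phi(z_\alpha-\sqrt{A/2})+o(1)$. Take the Rademacher prior $f_j = \epsilon_j\sqrt{v_j}$ for $j\leq N_n$ (zero otherwise) with iid symmetric signs $\epsilon_j$; by construction $\|f\|_2^2 = \rho_n$ and $\sum j^{2\beta}f_j^2 = M$ almost surely. The log-likelihood ratio for $P_0$ versus the mixture $P_\pi$ is
\[
\log L_n = \sum_{j=1}^{N_n}\Bigl[\log\cosh\bigl(n\sqrt{v_j}\,Y_j\bigr) - \tfrac12 nv_j\Bigr].
\]
Since $nv_j\to 0$ uniformly, the expansion $\log\cosh(x) = x^2/2 - x^4/12 + O(x^6)$ reduces $\log L_n$ to $\tfrac12\sum_j n^2 v_j(Y_j^2 - n^{-1})$ plus a deterministic correction $-\tfrac14 n^2\sum v_j^2$ and a vanishing stochastic remainder. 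Using $\mathrm{Var}(\chi_1^2) = 2$, one obtains $\log L_n \Rightarrow N(-A/4, A/2)$ under $P_0$, and by the contiguity relation $E_\pi = E_0 L_n$, $N(A/4, A/2)$ under $P_\pi$, where $A = A(c,\beta,M)$. The Neyman--Pearson Bayes test at level $\alpha$ then has asymptotic type II error $\Phi(z_\alpha - \sqrt{A/2})$, which lower bounds the minimax risk.

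The main obstacle is controlling the remainder in the $\log\cosh$ expansion in probability (not just in expectation): the $O(x^6)$ contributions involve sums like $\sum_j (nv_j)^3 \xi_j^6$ whose means are negligible but whose stochastic fluctuations must be bounded via a moderate-deviation argument for sums of many small higher-order summands, together with a uniform CLT for the quadratic part that holds simultaneously under $P_0$ and under the mixture. The remaining pieces -- the extremal optimisation, the quadratic-form CLT under a parametric family of alternatives, and the reconciliation of the constants with Ermakov's $A_0(\beta)$ -- are routine once this quadratic approximation is in hand.
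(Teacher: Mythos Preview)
Your upper bound (part ii) is essentially the paper's argument: the same Pinsker-type weights, the same quadratic statistic, the same CLT under null and alternative, and the same Lagrangian reduction identifying the worst-case $f$ as the extremal profile itself.

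For the lower bound (part i), however, you take a genuinely different route. The paper uses a \emph{Gaussian} prior $f_j\sim N(0,\sigma_j^{*2})$ with $\sigma_j^{*2}=(\lambda-\mu j^{2\beta})_+$. Under that prior the marginal of $Y$ is exactly $\bigotimes_j N(0,n^{-1}+\sigma_j^{*2})$, so the Bayes likelihood ratio is an \emph{exact} quadratic form in $Y$ and the Neyman--Pearson test is literally one of the quadratic tests already analysed; no $\log\cosh$ expansion or remainder control is needed. The price is that the Gaussian prior does not sit on the alternative, so one must argue (as the paper does, via an $\varepsilon$-enlargement and a truncation step) that it asymptotically concentrates on $\Sigma(\beta,M(1+\varepsilon))\cap B_{\rho(1-\varepsilon)}$, then invoke continuity of $A(c,\beta,M)$ in $M$ and $c$. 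Your Rademacher prior inverts the trade-off: it lives exactly on $\|f\|^2=\rho_n$ (and on $\sum j^{2\beta}f_j^2=M+o(1)$, so you still need the same $\varepsilon$-continuity step you gloss over when you write ``$=M$ almost surely''), but the likelihood ratio is a product of $\cosh$ factors and you must justify the quadratic approximation by controlling the higher-order remainders, which you correctly flag as the main technical point. Both approaches are standard and lead to the same constant; the paper's Gaussian-prior route is cleaner analytically because it avoids the expansion entirely and feeds directly into the saddle-point identity $L(d_0,f_0^2)=\frac{n}{\sqrt 2}\|f_0^2\|$, making the link to the Pinsker-type Bayes--minimax structure explicit.
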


This gives the sharp asymptotics for the minimax type II error at the
separation rate, analogous to the Pinsker constant \cite{pinsk-80} for
nonparametric estimation. The optimal test attaining the bound of (ii) above,
as given in \cite{Ermak-90}, depends on $\beta$ and $M$. Concerning adaptivity
in both of these parameters, the following result is known.\pagebreak

\begin{proposition}
\label{propos-Spok-96}(Spokoiny \cite{spokoi-96}). Let $\mathcal{T}$ be a
subset of $(0,\infty)\times(0,\infty)$ such that there exist $M>0$, $\beta
_{2}>\beta_{1}>0$ and
\[
\mathcal{T}\supseteq\{(\beta,M):\beta_{1}\leq\beta\leq\beta_{2}\}.
\]
(i) If $t_{n}\ll(\log\log n)^{1/2}$ and $\rho_{n}\sim c\cdot(n/t_{n}%
)^{-4\beta/(4\beta+1)}$, then for any $c>0$ and any sequence of tests
$\phi_{n}$ satisfying $E_{n,0}\phi_{n}\leq\alpha+o(1)$, and not depending on
$\beta$or $M$, we have
\[
\sup_{(\beta,M)\in\mathcal{T}}\Psi(\phi_{n},\rho_{n},\beta,M)\geq
1-\alpha+o(1).
\]
\newline(ii) For any $\beta^{\ast}>1/2$ and $0<M_{1}\leq M_{2}$, let
\[
\mathcal{T}=\{(\beta,M):1/2<\beta\leq\beta^{\ast},M_{1}\leq M\leq M_{2}\}.
\]
Then there exist a constant $c_{1}=c_{1}(\beta^{\ast},M_{1},M_{2})$ and a
sequence of tests $\phi_{n}$ satisfying $E_{n,0}\phi_{n}=o(1)$ such that, if
\begin{equation}
\rho_{n}\sim c_{1}\left(  \frac{n}{(\log\log n)^{1/2}}\right)  ^{-4\beta
/(4\beta+1)} \label{adapt-minimax-rate}%
\end{equation}
then
\begin{equation}
\sup_{(\beta,M)\in\mathcal{T}}\Psi(\phi_{n},\rho_{n},\beta,M)=o(1).
\label{worst-case-adaptive-error}%
\end{equation}

\end{proposition}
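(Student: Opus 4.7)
The two parts of the proposition call for essentially different techniques. Part (i) is a lower bound that I would establish by a Bayesian reduction: exhibit a mixture prior on alternatives $f\in\Sigma(\beta,M)\cap B_{\rho_{n}}$ against which no level-$\alpha$ test can outperform the trivial one. The restriction $t_{n}\ll(\log\log n)^{1/2}$ controls how many individual priors may be superposed. Part (ii) is a positive statement, to be proved by combining quadratic statistics tuned to a finite grid of smoothness values, with a Bonferroni threshold adjustment that absorbs the $(\log\log n)^{1/2}$ penalty.

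\textbf{Lower bound (i).} Fix any $M>0$ and pick a grid $\beta_{1}=b_{1}<b_{2}<\cdots<b_{K_{n}}=\beta_{2}$ with $K_{n}\to\infty$, and set $N_{k}\asymp(n/t_{n})^{2/(4b_{k}+1)}$. Arrange the grid so that the index ranges $[N_{k},2N_{k}]$ are pairwise disjoint; this fits comfortably inside the available frequencies with room to let $K_{n}$ grow up to order $\log n$. For each $b_{k}$, let $\pi_{k}$ be a product of independent $N(0,v_{k})$ distributions on the coordinates $j\in[N_{k},2N_{k}]$, with $v_{k}$ tuned so that $\pi_{k}$ concentrates on the boundary of $\Sigma(b_{k},M)\cap B_{\rho_{n}}$. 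Because the supports are disjoint in frequency, the likelihood ratios $L_{k}=dP_{n,\pi_{k}}/dP_{n,0}$ satisfy $E_{n,0}[L_{j}L_{k}]=1$ for $j\neq k$, and hence
\[
\chi^{2}(\bar{\pi},P_{n,0})=\frac{1}{K_{n}}\,\chi^{2}(\pi_{k},P_{n,0})
\]
for the average prior $\bar{\pi}=K_{n}^{-1}\sum_{k}\pi_{k}$. An Ingster-type calculation shows that $\chi^{2}(\pi_{k},P_{n,0})$ is of order $t_{n}^{8b_{k}/(4b_{k}+1)}=O(t_{n}^{2})=o(\log\log n)$ under the hypothesis on $t_{n}$; choosing $K_{n}\asymp\log\log n$ then forces $\chi^{2}(\bar{\pi},P_{n,0})\to0$, so that $\|P_{n,\bar{\pi}}-P_{n,0}\|_{\mathrm{TV}}\to0$, and the standard reduction from worst-case to Bayes risk yields the trivial maximin power.

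\textbf{Upper bound (ii) and main obstacle.} Choose a grid $1/2=b_{1}<b_{2}<\cdots<b_{K_{n}}=\beta^{\ast}$ with $K_{n}\asymp\log\log n$ fine enough that every $\beta\in[1/2,\beta^{\ast}]$ lies within $1/K_{n}$ of some $b_{k}$, and set $N_{k}\asymp(n/(\log\log n)^{1/2})^{2/(4b_{k}+1)}$. Introduce quadratic statistics $T_{k}=\sum_{j\leq N_{k}}(Y_{j}^{2}-n^{-1})$ with thresholds $\tau_{k}$ determined by $P_{n,0}(T_{k}>\tau_{k})=\alpha/K_{n}$; moderate deviation bounds for chi-square-type forms give $\tau_{k}\asymp n^{-1}\sqrt{N_{k}\log K_{n}}$. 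The adaptive test $\phi_{n}^{\ast}=\max_{k}\mathbf{1}\{T_{k}>\tau_{k}\}$ has level at most $\alpha$ by the union bound, and for every $(\beta,M)\in\mathcal{T}$ the statistic $T_{k^{\ast}}$ with $b_{k^{\ast}}$ closest to $\beta$ exceeds $\tau_{k^{\ast}}$ with probability $1-o(1)$ uniformly over $f\in\Sigma(\beta,M)\cap B_{\rho_{n}}$. The main obstacle is the moderate deviation control on both sides: in (i) the chi-square computation must be kept tight enough that the $(\log\log n)^{1/2}$ boundary is not crossed prematurely by the mixture prior, and in (ii) one must verify that the $\alpha/K_{n}$-quantile of $T_{k}$ exceeds the null mean by the correct order $n^{-1}\sqrt{N_{k}\log\log n}$, which is precisely the amount of signal energy supplied by the $(\log\log n)^{1/2}$-enlargement of $\rho_{n}$.
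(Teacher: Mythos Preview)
The paper does not prove Proposition~\ref{propos-Spok-96}; it is quoted as a known result of Spokoiny~\cite{spokoi-96} and serves only as background for the paper's own Theorems~\ref{adaptationimpossible} and~\ref{sharpadaptation}. There is therefore nothing in the paper to compare your argument against. That said, your sketch is broadly the standard route (mixture prior for the lower bound, Bonferroni over a grid of quadratic statistics for the upper bound), but part~(i) contains a quantitative slip that is worth flagging because it is exactly the place where the $(\log\log n)^{1/2}$ threshold emerges.

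You write $\chi^{2}(\pi_{k},P_{n,0})=O(t_{n}^{2})$. This is not right: for a Gaussian product prior on $N_{k}\asymp(n/t_{n})^{2/(4b_{k}+1)}$ coordinates tuned to sit on $\Sigma(b_{k},M)\cap B_{\rho_{n}}$, the second moment of the likelihood ratio is a product, and one gets
\[
1+\chi^{2}(\pi_{k},P_{n,0})=\prod_{j}\bigl(1-(nv_{k})^{2}\bigr)^{-1/2}\;\asymp\;\exp\Bigl(c\,\frac{(n\rho_{n})^{2}}{N_{k}}\Bigr)\;\asymp\;\exp\bigl(c\,t_{n}^{2}\bigr),
\]
i.e.\ the individual $\chi^{2}$ is \emph{exponential} in $t_{n}^{2}$, not polynomial. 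Consequently the averaging $\chi^{2}(\bar\pi,P_{n,0})\asymp K_{n}^{-1}\exp(c\,t_{n}^{2})$ forces you to take $K_{n}$ as large as possible, and the number of disjoint dyadic-type frequency blocks available in $[\beta_{1},\beta_{2}]$ is of order $\log n$, not $\log\log n$. With $K_{n}\asymp\log n$ one needs $\exp(c\,t_{n}^{2})=o(\log n)$, i.e.\ $t_{n}^{2}=o(\log\log n)$, which is precisely the hypothesis $t_{n}\ll(\log\log n)^{1/2}$. Your choice $K_{n}\asymp\log\log n$ would not suffice. With this correction the argument is essentially Spokoiny's; the upper bound sketch in part~(ii) is along the right lines, with the matching observation that a grid of order $\log n$ bandwidths gives Bonferroni thresholds of order $\sqrt{\log\log n}$, absorbed by the enlarged radius.
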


Here the criterion to evaluate a test sequence has changed, to include the
worst case type II error over a whole range of $\beta,M$. Hence the critical
radius rate (\ref{adapt-minimax-rate}) has to be interpreted as an
\textit{adaptive separation rate. }It differs by a factor $(\log\log
n)^{2\beta/(4\beta+1)}$ from the nonadaptive separation rate (\ref{separ-rate}%
); this factor is an example of the well-known phenomenon of a penalty for
adaptation. Furthermore, as noted in \cite{spokoi-96}, a degenerate behaviour
occurs here, in that both error probabilities at the critical rate tend to
zero. Thus any sequence $\phi_{n}$ of tests fulfilling
(\ref{worst-case-adaptive-error}) should be seen as \textit{adaptive rate
optimal}, comparable to rate optimal tests in the nonadaptive case (that is,
tests fulfilling $\limsup_{n}\Psi(\phi_{n},\rho_{n},\beta,M)<1-\alpha$ at
$\rho_{n}$ given by (\ref{separ-rate})). In Ingster and Suslina
\cite{Ingst-Sus-book}, chap. 7, the worst case adaptive error
(\ref{worst-case-adaptive-error}) is further analyzed, with a view to a sharp
asymptotics; cf. Remark \ref{rem:Ingster-adaptivity} below for a discussion in
relation to our results.

In this paper we address the question of whether an exact type II error
asymptotics in the sense of \cite{Ermak-90} is possible in an adaptive
setting. In our approach $\beta$ is kept fixed, while we aim for adaptation
over the ellipsoid size $M$. First, we present a negative result for
adaptation at the classical separation rate (\ref{separ-rate}).

\begin{theorem}
Suppose $c>0$, $0<M_{1}<M_{2}<\infty$ and $\rho_{n}\sim c\cdot n^{-4\beta
/(4\beta+1)}$. Then there is no test $\phi_{n}$ satisfying $E_{n,0}\phi
_{n}\leq\alpha+o(1)$, not depending on $i=1,2$ but satisfying both relations
\[
\Psi_{n}(\phi_{n},\rho_{n},\beta,M_{i})\leq\Phi(z_{\alpha}-\sqrt
{A(c,\beta,M_{i})/2})+o(1),\;\;\;i=1,2.
\]
\label{adaptationimpossible}
\end{theorem}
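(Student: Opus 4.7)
The plan is to argue by contradiction, exploiting asymptotic uniqueness of the Neyman--Pearson test.  The proof of Proposition \ref{propos-ermak-90}(i) produces, for each $i=1,2$, a sequence of sign-symmetric product priors $\mu_{n,i}$ concentrated on $\Sigma(\beta,M_i)\cap B_{\rho_n}$ for which the Bayes type II error of any $\alpha$-test is at least $\Phi(z_{\alpha}-\sqrt{A_i/2})-o(1)$, where $A_i=A(c,\beta,M_i)$.  Since the worst-case error $\Psi_n(\phi_n,\rho_n,\beta,M_i)$ dominates $E_{\mu_{n,i}}(1-\phi_n)$, the assumed upper bound forces
\[
E_{\mu_{n,i}}(1-\phi_n)=\Phi(z_{\alpha}-\sqrt{A_i/2})+o(1)\qquad(i=1,2),
\]
so $\phi_n$ is asymptotically Neyman--Pearson-optimal at level $\alpha$ against both $\mu_{n,1}$ and $\mu_{n,2}$ simultaneously.

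Next I would pin down the joint asymptotic distribution of the log-likelihoods $\ell_{n,i}=\log(d\mu_{n,i}/dP_{n,0})$.  Expanding $\log\cosh$ in the sign-averaged product density gives $\ell_{n,i}=\tfrac{1}{2}n\sum_j v_{j,i}^{2}(nY_j^{2}-1)+o_p(1)$ under $P_{n,0}$, where $v_{\cdot,i}$ are Ermakov's weight sequences.  A Lindeberg CLT then yields joint asymptotic normality of $(\ell_{n,1},\ell_{n,2})$ with means $-\sigma_i^{2}/2$, variances $\sigma_i^{2}=A_i/2$, and asymptotic correlation
\[
r=\lim_n\frac{\sum_j v_{j,1}^{2}v_{j,2}^{2}}{\bigl(\sum_j v_{j,1}^{4}\bigr)^{1/2}\bigl(\sum_j v_{j,2}^{4}\bigr)^{1/2}}.
\]
The critical input is the strict inequality $r<1$.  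This is where Pinsker-type arguments enter: the profile $v_{j,i}^{2}$ is determined by Karush--Kuhn--Tucker conditions for the Bayes-minimax problem underlying Ermakov's bound on $\Sigma(\beta,M_i)\cap B_{\rho_n}$, and the resulting shape and effective cutoff depend genuinely on $M_i$.  For $M_1\ne M_2$ the two profiles are not asymptotically proportional, so Cauchy--Schwarz yields $r<1$ strictly.

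In the final step I invoke asymptotic Neyman--Pearson uniqueness: because $\ell_{n,i}$ converges to a continuous (Gaussian) law under $P_{n,0}$, any asymptotically level-$\alpha$ NP-optimal test against $\mu_{n,i}$ must satisfy $\phi_n=\mathbf{1}\{\ell_{n,i}>z_{\alpha}\sigma_i-\sigma_i^{2}/2\}+o_p(1)$ under $P_{n,0}$.  Applying this for $i=1$ and $i=2$ and subtracting, the two threshold indicators must differ by $o_p(1)$ under $P_{n,0}$; but in the bivariate Gaussian limit with correlation $r<1$, the symmetric difference $\{Z_1>z_{\alpha}\}\triangle\{Z_2>z_{\alpha}\}$ has strictly positive probability --- the desired contradiction.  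The main obstacle is the verification $r<1$: while the joint CLT and the NP uniqueness step are standard, the strict non-proportionality of Ermakov's optimal weight sequences for distinct ellipsoid sizes requires an explicit analysis of the Pinsker-type extremal problem defining $v_{j,i}$.
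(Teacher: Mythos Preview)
Your overall strategy coincides with the paper's: introduce two least-favorable priors (one for each $M_i$), argue that the hypothesized test would have to be asymptotically Neyman--Pearson optimal against both, and derive a contradiction from the fact that the two NP tests cannot coincide when the limiting correlation satisfies $r<1$.  The technical execution differs in two places.  First, the paper uses \emph{Gaussian} priors $f_j\sim N(0,\sigma_j^{*2}(M_i))$ rather than sign-symmetric two-point priors; the advantage is that the marginals $Q_{0,n},Q_{1,n},Q_{2,n}$ of $Y$ are then exactly Gaussian, so the bivariate quadratic statistic $T_n$ built from the two weight profiles is \emph{exactly sufficient} for $\{Q_{0,n},Q_{1,n},Q_{2,n}\}$ and one can reduce to tests depending only on $T_n$ without any $\log\cosh$ expansion.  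Second, instead of invoking an asymptotic NP-uniqueness statement of the form $\phi_n=\mathbf{1}\{\ell_{n,i}>c_i\}+o_p(1)$, the paper proves total-variation convergence of the law of $T_n$ to a bivariate Gaussian limit, extracts a limiting test $\phi^*$ via weak compactness, and obtains the contradiction cleanly in the finite-dimensional limit experiment (no UMP $\alpha$-test exists for $N(0,\Sigma)$ against two non-collinear normal alternatives).  Your route is correct but the $o_p(1)$-uniqueness step needs a short argument; the paper's sufficiency-plus-limit-experiment path makes that step exact.  On the crucial input $r<1$, the paper computes $r$ explicitly as $r=(M_1/M_2)^{1/(4\beta)}\,(4\beta+1-M_1/M_2)/(4\beta)$ and checks monotonicity in $M_1/M_2$, which is more concrete than your Cauchy--Schwarz/non-proportionality argument but reaches the same conclusion.
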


This result states that adaptation even just over $M$ is impossible at the
separation rate. Instead, we enlarge the radius slightly and examine how the
minimax error approaches zero. To be specific, we replace the constant $c$ in
$\rho_{n}\sim c\cdot n^{-4\beta/(4\beta+1)}$ by a sequence $c_{n}$ tending to
infinity slowly. In that case the minimax type II\ error bound of Proposition
\ref{propos-ermak-90}, namely $\Phi(z_{\alpha}-\sqrt{A(c,\beta,M)/2})$ will
tend to zero (since $A(c,\beta,M)$ as defined in (\ref{A-c-beta-M-def})
contains a factor $c^{2+1/(2\beta)}$). When the log-asymptotics of this error
probability is considered, as in moderate and large deviation theory, it turns
out that adaptation to Ermakov%
\'{}%
s constant is possible.

\begin{theorem}
Assume $c_{n}\rightarrow\infty$ and $c_{n}=o(n^{K})$ for every $K>0$. If
$\rho_{n}=c_{n}\cdot n^{-4\beta/(4\beta+1)}$ then there exists a test
$\phi_{n}$ not depending on $M$ such that
\[
E_{n,0}\phi_{n}\leq\alpha+o(1),
\]
and for all $M>0$
\[
\limsup_{n}\frac{1}{c_{n}^{2+1/(2\beta)}}\log\Psi(\phi_{n},\rho_{n}%
,\beta,M)\leq-\frac{A_{0}(\beta)M^{-1/(2\beta)}}{4}.
\]
\label{sharpadaptation}
\end{theorem}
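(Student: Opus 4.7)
The plan is to discretize the size parameter $M$ on a grid whose cardinality $K_n$ grows slowly enough that the Bonferroni penalty $\log K_n$ is negligible at the moderate-deviation scale $c_n^{2+1/(2\beta)}$. Since $c_n\to\infty$, one may choose $K_n\to\infty$ with $\log K_n = o(c_n^{2+1/(2\beta)})$; for example $K_n = \lfloor c_n\rfloor$ works. I would form a geometric grid $\mathcal{M}_n = \{M_k : |k|\le K_n\}$ with $M_k = \delta_n^k$ and ratio $\delta_n = \exp(1/\sqrt{K_n})\to 1$; this spans the range $[\exp(-\sqrt{K_n}),\exp(\sqrt{K_n})]$ and so covers any fixed $M>0$ for $n$ large. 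For each $M_k$, let $\phi_n^{(k)}$ be the Ermakov test of Proposition~\ref{propos-ermak-90}(ii) constructed for the alternative $\Sigma(\beta,M_k)\cap B_{\rho_n}$ at the reduced level $\alpha_n := \alpha/(2K_n+1)$, and set
\[
\phi_n := \max_{|k|\le K_n}\phi_n^{(k)},
\]
which depends on $\beta$ but not on $M$. A Bonferroni bound then gives $E_{n,0}\phi_n \le (2K_n+1)\alpha_n = \alpha$, establishing the level.

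The next step would be to reduce the type II analysis to a single component test. For fixed $M>0$ and $n$ large, let $k^*$ be the unique index with $M_{k^*-1}<M\le M_{k^*}$; then $M_{k^*}/M\le\delta_n\to 1$, and the inclusion $\Sigma(\beta,M)\subset\Sigma(\beta,M_{k^*})$ together with $\phi_n\ge\phi_n^{(k^*)}$ yields
\[
\Psi(\phi_n,\rho_n,\beta,M) \;\le\; \Psi(\phi_n^{(k^*)},\rho_n,\beta,M_{k^*}).
\]
The main analytic obstacle is a moderate-deviation refinement of the upper bound in Proposition~\ref{propos-ermak-90}(ii): for the single Ermakov test $\phi_n^{(k^*)}$ used at vanishing level $\alpha_n$ and separation sequence $c_n\to\infty$, one must show
\[
\Psi(\phi_n^{(k^*)},\rho_n,\beta,M_{k^*}) \;\le\; \Phi\bigl(z_{\alpha_n}-\sqrt{A(c_n,\beta,M_{k^*})/2}\,\bigr)(1+o(1)).
\]
The Ermakov statistic has the form $T_n = \sum_{j\le N}h_j(nY_j^2-1)$, a weighted sum of independent noncentral $\chi_1^2$-variables with $N \asymp n^{2/(4\beta+1)}\to\infty$; one has to verify that its Gaussian approximation remains sharp simultaneously down to tail level $\alpha_n$ and up to mean-shift of order $c_n^{1+1/(4\beta)}$. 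This calls for a Cram\'er-type exponential Chebyshev argument exploiting the explicit moment generating function of $\chi_1^2(\lambda)$; the regime is accessible because $\log(1/\alpha_n) = O(\log K_n) = o(c_n^{2+1/(2\beta)})$ sits well inside the moderate-deviation range of a sum of $N\to\infty$ summands.

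Granting that refinement, the evaluation is routine. Since $z_{\alpha_n}^2 = O(\log K_n) = o(c_n^{2+1/(2\beta)})$ while $A(c_n,\beta,M_{k^*})$ is of order $c_n^{2+1/(2\beta)}\to\infty$, the Gaussian tail asymptotic $\log\Phi(-t) \sim -t^2/2$ yields
\[
\log\Phi\bigl(z_{\alpha_n}-\sqrt{A(c_n,\beta,M_{k^*})/2}\,\bigr) \;=\; -\tfrac14 A(c_n,\beta,M_{k^*})\bigl(1+o(1)\bigr).
\]
The density of the grid gives $M_{k^*}^{-1/(2\beta)} = M^{-1/(2\beta)}(1+o(1))$; dividing by $c_n^{2+1/(2\beta)}$ and passing to $\limsup$ then produces the asserted bound $-A_0(\beta)M^{-1/(2\beta)}/4$. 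The heart of the argument is thus the moderate-deviation upper bound for the single-$M$ Ermakov test; everything else (grid construction, union bound, Gaussian tail evaluation) is bookkeeping enabled by the assumption $c_n\to\infty$.
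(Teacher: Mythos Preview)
Your approach is correct and takes a genuinely different route from the paper. The paper constructs a single adaptive test by a plug-in method in the spirit of Golubev's adaptive estimation: it introduces the estimator $\hat M=\sum_{j\le\tilde N}(Y_j^2-1/n)j^{2\beta}+\gamma_n$, shows that an oracle test based on the unobservable surrogate $M_0(f)=\sum_{j\le\tilde N}j^{2\beta}f_j^2+\gamma_n$ already achieves the target log-asymptotics, proves $\hat M/M_0(f)\to 1$ in probability uniformly over the alternative, and then uses sample splitting to transfer the oracle bound to the plug-in test. You instead follow the multiple-testing paradigm (closer to Spokoiny's adaptation over $\beta$): grid $M$, run the Ermakov test at each grid point with Bonferroni-reduced level, and take the maximum. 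Both approaches ultimately rest on the same moderate-deviation input for the quadratic statistic, which you correctly flag as the crux; your route is more modular, reducing directly to the nonadaptive log-asymptotics of Ermakov~\cite{Ermak-08}, while the paper's construction produces a single explicit statistic and makes transparent the parallel with Pinsker-bound adaptivity in estimation that motivates the paper. One minor caution: with your specific choice $K_n=\lfloor c_n\rfloor$, the smallest grid value $M_k=\exp(-\sqrt{K_n})$ can make the truncation index $N(M_k)\asymp (M_k/\rho_n)^{1/(2\beta)}$ fail to diverge (take for instance $c_n=(\log n)^2$), jeopardizing uniform level control across the grid; this is easily repaired by decoupling $K_n$ from $c_n$ and choosing any $K_n\to\infty$ with $K_n=o((\log n)^2)$.
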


However now, since the optimality criterion has been changed, a formal
argument is needed that no $\alpha$-test can be better in the sense of the
log-asymptotics for the error of second kind. Such a result is implied by
Theorem 3 in Ermakov \cite{Ermak-08}, where the nonadaptive sharp asymptotics
is studied in a setting where $\rho_{n}=c_{n}\cdot n^{-4\beta/(4\beta+1)}$
with $c_{n}\rightarrow\infty$, hence type II error probability tends to zero.

\begin{proposition}
Under the assumptions of the previous theorem, any test $\phi_{n}$ (possibly
depending on $M$) satisfying $E_{n,0}\phi_{n}\leq\alpha+o(1)$ also fulfills
\begin{equation}
\liminf_{n}\frac{1}{c_{n}^{2+1/(2\beta)}}\log\Psi(\phi_{n},\rho_{n}%
,\beta,M)\geq-\frac{A_{0}(\beta)M^{-1/(2\beta)}}{4}.
\label{lower-bound-mod-dev-testing}%
\end{equation}

\end{proposition}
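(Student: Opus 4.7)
The plan is to deduce this lower bound directly from Theorem~3 of Ermakov \cite{Ermak-08}, as the authors indicate. Since the test $\phi_n$ in the proposition is allowed to depend on $M$, this is fundamentally a nonadaptive statement for fixed $\beta,M$: one needs the sharp log-asymptotics of the nonadaptive minimax type II error $\pi_n(\alpha,\rho_n,\beta,M)$ when $\rho_n=c_n\cdot n^{-4\beta/(4\beta+1)}$ with $c_n\to\infty$. Ermakov's \cite{Ermak-08} Theorem~3 is precisely of this form, so the proof reduces to verifying that the hypotheses match and that the constant in \cite{Ermak-08} translates into $A_0(\beta)M^{-1/(2\beta)}/4$ with exponent $c_n^{2+1/(2\beta)}$.

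First, I would check the moderate-deviation regime. The condition $c_n=o(n^{K})$ for all $K>0$ guarantees $\rho_n$ still tends to zero, so the problem remains in the regime where the likelihood-ratio statistic can be analysed by its Gaussian approximation; in particular this ensures that the "$-x^2/2$" expansion of $\log(1-\Phi(x))$ is applicable with $x$ of order $c_n^{1+1/(4\beta)}$, which produces the stated exponent $c_n^{2+1/(2\beta)}$.

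Second, I would recall the structure underlying \cite{Ermak-08}: the lower bound is obtained by reducing to a Bayes problem with a Pinsker-type Gaussian product prior $\pi_n$ supported on (or sufficiently concentrated near) $\Sigma(\beta,M)\cap B_{\rho_n}$. The variances of the prior are chosen of Pinsker form $v_j\propto(1-(j/N_n)^{\beta})_+$ with an effective dimension $N_n$ tuned to $\rho_n$, $\beta$, $M$. Expanding the log-likelihood ratio into a weighted sum of centered $\chi^2$ contributions and applying a moderate-deviation estimate for such sums yields the Bayes risk asymptotics, and the optimization of the Pinsker functional over admissible variance sequences gives rise to the Ermakov constant $A_0(\beta)$ as in \eqref{A-0-beta-def}, while the dependence on $M$ enters through the rescaling and produces the factor $M^{-1/(2\beta)}$. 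The minimax bound $\pi_n(\alpha,\rho_n,\beta,M)\ge B_{\pi_n}(\alpha)+o(1)$ then converts the Bayes asymptotics into the claimed inequality.

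The main obstacle, and essentially the only point requiring care, is to confirm that Ermakov's Theorem~3 is formulated in a way compatible with our radius sequence $\rho_n=c_n n^{-4\beta/(4\beta+1)}$ (rather than some equivalent parametrization) and that the normalizing exponent and constant convert to $c_n^{2+1/(2\beta)}$ and $A_0(\beta)M^{-1/(2\beta)}/4$. Once this translation is verified, the proposition is a direct citation; no separate moderate-deviation argument is needed here, since all of the heavy lifting -- the construction of the least favorable prior and the moderate-deviation control of the likelihood ratio -- is performed in \cite{Ermak-08}.
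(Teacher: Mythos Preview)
Your proposal is correct and matches the paper's approach exactly: the paper itself simply states that the result is implied by Theorem~3 in \cite{Ermak-08} and omits the proof. Your additional discussion of the translation of constants and the moderate-deviation regime is more detailed than what the paper provides, but the core argument---direct citation of Ermakov's nonadaptive lower bound---is identical; one small slip is that the Pinsker-type prior variances in this testing setting have exponent $2\beta$ rather than $\beta$, i.e.\ $v_j\propto(1-(j/N_n)^{2\beta})_+$.
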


This result is implied by Theorem 3 in \cite{Ermak-08}, and hence the proof is omitted.

To further discuss the context of the main results, we note the following points.

\begin{remark}
\textit{ Logarithmic vs. strong asymptotics}\textbf{. }\emph{In
\cite{Ermak-08} it is also shown that, for nonadaptive testing where }%
$\rho_{n}=c_{n}\cdot n^{-4\beta/(4\beta+1)}$\emph{, }$c_{n}\rightarrow\infty
$\emph{, the lower bound (\ref{lower-bound-mod-dev-testing}) is attainable, so
that the minimax type II error defined by (\ref{minimax-type-II-error-def})
satisfies }%
\begin{equation}
\log\pi_{n}(\alpha,\rho_{n},\beta,M)\sim-\frac{1}{4}A(c_{n},\beta,M).
\label{log-asymptotics-minimax-err}%
\end{equation}
\emph{This holds as long as }$\rho_{n}\ll n^{-2\beta/(2\beta+1)}$\emph{.
Moreover if additionally }$\rho_{n}\ll n^{-3\beta/(3\beta+1)}$\emph{ then the
log-asymptotics (\ref{log-asymptotics-minimax-err}) can be strengthened to }%
\begin{equation}
\pi_{n}(\alpha,\rho_{n},\beta,M)\sim\Phi(z_{\alpha}-\sqrt{A(c_{n},\beta
,M)/2}). \label{strong-asymptotics-minimax-err}%
\end{equation}
\emph{Results (\ref{log-asymptotics-minimax-err}) and
(\ref{strong-asymptotics-minimax-err}) have been obtained within a framework
of efficient inference for moderate deviation probabilities, cf. Ermakov
\cite{Ermak-MD}, \cite{Ermak-11}. Recall that in our setting }$c_{n}=o(n^{K}%
)$\emph{ for every }$K>0$\emph{, so that the strong asymptotics
(\ref{strong-asymptotics-minimax-err}) holds in the nonadaptive setting. It is
an open question whether an adaptive analog of
(\ref{strong-asymptotics-minimax-err}) holds.}\newline\emph{For standardized
sums }$T_{n}$\emph{ of independent random variables, if }$\{T_{n}>x_{n}%
\}$\emph{ is a large or moderate deviation event, theorems on the relative
error caused by replacing the exact distribution of }$T_{n}$\emph{ by its
limiting distribution are sometimes called strong large or moderate deviation
theorems to distinguish them from first order results on }$\log P(T_{n}%
>x_{n})$\emph{. For a background cf. \cite{Petrov-book},
\cite{Inglot-et-al-92}, \cite{Chen-et-al-book}, chap. 11.}
\end{remark}

\begin{remark}
\label{rem:Ingster-adaptivity}\textit{Sharp asymptotics with both }$\beta
,M$\textit{ unknown}. \emph{The adaptivity result of Spokoiny \cite{spokoi-96}%
, discussed in Proposition \ref{propos-Spok-96}, about the rate penalty for
adaptation }$(\log\log n)^{2\beta/(4\beta+1)}$\emph{, does not provide a sharp
risk asymptotics in the sense of either Proposition \ref{propos-ermak-90} or
our Theorems \ref{adaptationimpossible} and \ref{sharpadaptation}. Some
results in this direction are presented in section 7.1.3 of Ingster and
Suslina \cite{Ingst-Sus-book}. To clarify the relation to our setting where
}$\beta$\emph{ is fixed and adaptivity refers to the size parameter }%
$M$\emph{, let us discuss these results here.}\newline\emph{Let us first
reformulate the result of Proposition \ref{propos-ermak-90} (that is
\cite{Ermak-90}) for known }$\beta,M$\emph{ in a certain dual way, where a
given type II error is prescribed and it is shown to be attainable on a radius
sequence }$\rho_{n}$\emph{ which then varies with }$\beta,M$\emph{. Suppose
}$\alpha\in(0,1)$\emph{ and }$d>0$\emph{ are given, and suppose the radius
}$\rho_{n}$\emph{ satisfies}%
\[
\rho_{n}^{(4\beta+1)/4\beta}\sim n^{-1}A_{1}\left(  \beta\right)  M^{1/4\beta
}d
\]
\emph{where }$A_{1}\left(  \beta\right)  =\left(  A_{0}\left(  \beta\right)
/2\right)  ^{-1/2}$\emph{, and }$A_{0}\left(  \beta\right)  $\emph{ is given
by (\ref{A-0-beta-def}). Then for any sequence of tests }$\phi_{n}$\emph{
satisfying }$E_{n,0}\phi_{n}\leq\alpha+o(1)$\emph{ we have }%
\[
\Psi(\phi_{n},\rho_{n},\beta,M)\geq\Phi(z_{\alpha}-d)+o(1)\text{ as
}n\rightarrow\infty,
\]
\emph{and there is a sequence }$\phi_{n}$\emph{ (depending on }$\beta
,M$\emph{) attaining this lower bound. This follows directly from Proposition
\ref{propos-ermak-90} by setting }$d=\sqrt{A(c,\beta,M)/2}$\emph{ and solving
for }$c$\emph{.}\newline\emph{In the setting of \cite{Ingst-Sus-book}, the
smoothness parameter }$\beta$\emph{ varies over a range }$\left[  \beta
_{1},\beta_{2}\right]  $\emph{, as in Proposition \ref{propos-Spok-96}. To
state the lower asymptotic risk bound, assume that }$0<\beta_{1}<\beta_{2}%
$\emph{, that }$M>0$\emph{ is fixed and define }%
\[
\mathcal{T}=\{(\beta,M):\beta_{1}\leq\beta\leq\beta_{2}\}.
\]
\emph{Let }$D\in R$\emph{ be arbitrary and define a radius sequence }%
$\rho_{n,\beta,M}$\emph{ by }%
\begin{equation}
\left(  \rho_{n,\beta,M}\right)  ^{(4\beta+1)/4\beta}=n^{-1}A_{1}\left(
\beta\right)  M^{1/4\beta}\left(  \left(  2\log\log n\right)  ^{1/2}+D\right)
. \label{sharp-adapt-radius-def}%
\end{equation}
\emph{The lower asymptotic risk bound (a variation of Theorem 7.1 in
\cite{Ingst-Sus-book}) can then be formulated as follows. For any sequence of
tests }$\phi_{n}$\emph{ satisfying }$E_{n,0}\phi_{n}\leq\alpha+o(1)$\emph{ we
have }%
\begin{equation}
\sup_{(\beta,M)\in\mathcal{T}}\Psi(\phi_{n},\rho_{n,\beta,M},\beta
,M)\geq\left(  1-\alpha\right)  \Phi\left(  -D\right)  +o(1).
\label{ingster-adaptive-lower-bound}%
\end{equation}
\emph{Note in this setting, the test sequences }$\phi_{n}$\emph{ are assumed
not to depend on }$\beta$\emph{ but the radius }$\rho_{n,\beta,M}$\emph{ does.
Note that part (i) of Proposition \ref{propos-Spok-96} is implied by
(\ref{ingster-adaptive-lower-bound}) by letting }$D\rightarrow-\infty$%
\emph{.}\newline\emph{As to the attainability of this bound, the test provided
in section 7.3 of \cite{Ingst-Sus-book} depends on }$M$\emph{. Indeed in
\cite{Ingst-Sus-book} observations are assumed to be }$X_{j}=v_{j}+\xi_{j}%
$\emph{, where }$\xi_{j}$\emph{ are i.i.d. standard normal and }$v=\left(
v_{j}\right)  _{j=1}^{\infty}$\emph{ satisfies restrictions }$\sum_{j}%
v_{j}^{2}\geq r^{2}$\emph{, }$\sum_{j}j^{2\beta}v_{j}^{2}\leq R^{2}$\emph{
where }$R\rightarrow\infty$\emph{ and }$r/R\rightarrow0$\emph{ (the "power
norm" case in the book, where }$p=q=2,s=\beta;$\emph{ also }$r$\emph{ is
}$\rho$\emph{ in \cite{Ingst-Sus-book}). This observation model is equivalent
to ours upon setting }$R^{2}=nM$\emph{, }$r^{2}=n\rho$\emph{, and then }%
$Y_{j}=n^{-1/2}X_{j}$\emph{, }$f_{j}=n^{-1/2}v_{j}$\emph{. The reasoning
provided in section 7.3.2 of \cite{Ingst-Sus-book} makes it clear that the
test constructed uses solutions of an extremal problem under restrictions
}$\left\{  v:\sum_{j}v_{j}^{2}\geq r^{2},\sum_{j}j^{2\beta}v_{j}^{2}\leq
R^{2}\right\}  $\emph{ where }$r^{2}=n\rho_{n,\beta,M}$\emph{ with }%
$\rho_{n,\beta,M}$\emph{ from (\ref{sharp-adapt-radius-def}) and }$\beta
$\emph{ is from a certain grid of values in }$(\beta_{1},\beta_{2})$\emph{.
Since in particular }$R=n^{1/2}M^{1/2}$\emph{, it turns out that the estimator
depends on }$M$\emph{, though it has been made independent of }$\beta\in
(\beta_{1},\beta_{2})$\emph{. A version of such results for }$\alpha_{n}%
$\emph{-tests with }$\alpha_{n}\rightarrow0$\emph{ is given in
\cite{Ingst-Sus-05}.\newline It should be noted that adaptation to }$\beta
$\emph{ only, with }$M$\emph{ remaining fixed, does not have a practical
interpretation in the context of smooth functions. Thus the problem of a sharp
risk bound for adaptation to }$\left(  \beta,M\right)  $\emph{ remains open in
nonparametric testing; for the analogous problem in the estimation case
(regarding the Pinsker bound), solutions have been presented by Golubev
\cite{gol-90-quasi} and Tsybakov \cite{Tsyb-book}, sec 3.7.}
\end{remark}

\begin{remark}
\textit{The detection problem.}\emph{ Instead of focussing on the worst case
type II\ error }$\Psi(\phi_{n},\rho,\beta,M)$\emph{
(\ref{worst-case-type-II-error-def})\ of }$\alpha$\emph{-tests }$\phi_{n}%
$\emph{, one may consider minimization of the sum of errors, that is of
}$E_{n,0}\phi_{n}+\Psi(\phi_{n},\rho,\beta,M)$\emph{, over all tests }%
$\phi_{n}$\emph{. That has been called the detection problem in the
literature; in \cite{Ingst-Sus-book} this problem is largely treated in
parallel to the one for }$\alpha$\emph{-tests. There and in
\cite{Ingst-Sus-11} one finds the analog of the nonadaptive sharp asymptotics
of Proposition \ref{propos-ermak-90}. It may be conjectured that analogs of
our Theorems \ref{adaptationimpossible} and \ref{sharpadaptation} concerning
adaptivity hold there as well.}
\end{remark}

\begin{remark}
\textbf{ } \textit{The plug-in method}. \emph{In the present setting, where
the degree of smoothness }$\beta$\emph{ is fixed but the ellipsoid size }%
$M$\emph{ is unknown, a natural approach to adaptivity is to try to estimate
}$M$\emph{ and use a plug-in method. However uniformly consistent estimators
of }$M$\emph{ do not exist (since the unit ball in }$L_{2}$\emph{ is not
compact), hence for minimax optimality, such a straighforward argument fails.
In the estimation setting, the solution found by Golubev \cite{Golub-87} is to
apply, for a biased estimator of }$M$\emph{, the same saddle point reasoning
which lies at the heart of the Pinsker \cite{pinsk-80} result about minimax
optimal estimation. The paper \cite{Golub-87} concerns the continuous white
noise model indexed by }$t\in\left[  0,1\right]  $\emph{, and the adaptivity
there incorporates two local aspects: one with respect to time }$t\in\left[
0,1\right]  $\emph{ and the other with respect to a local variant of Sobolev
smoothness classes. For more discussion cf. \cite{Gol-Nus-90}.}\newline%
\emph{Our result here is the analog of the one by Golubev \cite{Golub-87} for
estimation, but in testing it turns out that adaptivity is possible only in
conjunction with a tail probability (moderate deviation) approach. To further
clarify the connection to adaptive estimation, in section
\ref{sec:adaptive-estim} we present a short outline of the result of
\cite{Golub-87} in a simplified setting.}
\end{remark}

\begin{remark}
\textit{Quadratic functionals.}\textbf{ }\emph{In the literature it has been
noted that the nonparametric testing problem with an }$l_{2}$\emph{-ball
removed is related to the estimation problem of the quadratic functional
}$Q(f)=\left\Vert f\right\Vert _{2}^{2}$\emph{. In particular, it is known
that the optimal separation rate for testing }$\rho_{n}^{1/2}\asymp
n^{-2\beta/(4\beta+1)}$\emph{ (comp. (\ref{separ-rate})) and the minimax
optimal rate for estimating }$Q(f)$\emph{ over }$\Sigma(\beta,M)$\emph{
coincide if }$0<\beta<1/4$\emph{, but if }$\beta\geq1/4$\emph{ then the latter
rate becomes }$n^{-1/2}$\emph{ (the so-called elbow effect; cf. Klemel\"{a}
\cite{klemela} and references therein). Butucea \cite{Butucea-07} gave a
unified argument for lower bounds in the estimation and testing cases when
rates coincide. As far as adaptive estimation rates for }$Q(f)$\emph{ are
concerned, the logarithmic penalty factor in the "irregular" case }%
$0<\beta<1/4$\emph{ has been established in \cite{Efrom-Low-quadratic}. In
\cite{Efrom-94} it has been shown that at the point }$\beta=1/4$\emph{ the
optimal adaptive rate is }$n^{-1/2}c_{n}$\emph{ where }$c_{n}\rightarrow
\infty$\emph{ slower than any power function of }$n$\emph{, and for }%
$\beta>1/4$\emph{, there is no adaptation penalty on the optimal rate
}$n^{-1/2}$\emph{. In the case }$0<\beta<1/4$\emph{, the only sharp adaptive
minimaxity result for estimation of }$Q(f)$\emph{ we are aware of is in
\cite{klemela}; it concerns a case where the }$l_{2}$\emph{-Sobolev class
}$\Sigma(\beta,M)$\emph{ is replaced by an }$l_{p}$\emph{-smoothness body with
}$p=4$\emph{.}
\end{remark}

\begin{remark}
\textit{ The sup-norm problem.}\textbf{ }\emph{Lepski and Tsybakov
\cite{Lep-Tsybak} proved a sharp minimax result in testing when the
alternative is a H\"{o}lder class (denoted }$H\left(  \beta,L\right)  $\emph{,
say) with an sup-norm ball removed, which is a testing analog of the minimax
estimation result of Korostelev \cite{Korost} and also a sup-norm analog of
Ermakov \cite{Ermak-90}. For adaptive minimax estimation with unknown }%
$(\beta,L)$\emph{ in the sup-norm case cf. \cite{Gol-Lep}; for the testing
case where }$\beta$\emph{ is given, D\"{u}mbgen and Spokoiny
\cite{Duembgen-Spok} established a sharp adaptivity result with respect to the
size parameter }$L$\emph{ only. The result in Theorem 2.2. of
\cite{Duembgen-Spok} can be seen as a analog of the one given here, although
the methodology in the sup-norm case is much different due to the connection
to deterministic optimal recovery, cf. \cite{Lep-Tsybak}. The case of unknown
}$(\beta,L)$\emph{ seems to be an open problem in the sup-norm testing case,
with regard to sharp minimaxity, although in \cite{Duembgen-Spok} a test is
given which is adaptive rate optimal without a }$\log\log n$\emph{-type
penalty. Rohde \cite{rohde} discusses the sup-norm case for regression with
nongaussian errors, combining methods of \cite{Duembgen-Spok} with ideas
related to rank tests.}
\end{remark}

\begin{remark}
\textit{Density, regression and other models. } \emph{The phenomenon of the
}$\log\log n$\emph{-type penalty in the rate for adaptation when an }$L_{2}%
$\emph{-ball is removed, as found by \cite{spokoi-96}, has also been
established in a discrete regression model \cite{Gayraud-Pouet}, and in
density models with direct and indirect observations \cite{Fromont-Laurent},
\cite{Butu-Mat-Pouet}. For a review of adaptive separation rates and further
results in a Poisson process model cf. \cite{From-Laurent-et-al-Poisson}.}
\end{remark}

The structure of the paper is as follows. In Section \ref{sec:Bayes-minimax},
we discuss the background, for the nonadaptive setting, of the sharp
asymptotic minimaxity result for testing of Ermakov \cite{Ermak-90} and its
analogy to the Pinsker \cite{pinsk-80} constant. In Section
\ref{sec:adaptation-impossible} we present the proof of Theorem
\ref{adaptationimpossible} about the lower bound (the necessary penalty) for
adaptation and in Section \ref{sec:sharp-adaptation}, Theorem
\ref{sharpadaptation} concerning attainability is proved. In an appendix
(Section \ref{sec:adaptive-estim}), we present some more background for the
reader, by giving a brief sketch of the estimation analog of our nonparametric
testing result (Golubev \cite{Golub-87}). Finally, Section
\ref{sec: proofs-Bayes-minimax} contains some proofs for the background
Section \ref{sec:Bayes-minimax}.

\section{The Bayes-minimax problem for nonparametric
testing\label{sec:Bayes-minimax}}

The purpose of this expository section is to elucidate the analogy between the
Pinsker constant \cite{pinsk-80} for $l_{2}$-estimation over ellipsoids and
the constant found by Ermakov \cite{Ermak-90} for nonparametric testing over
ellipsoids with an $l_{2}$-ball removed. We draw on the backgound explanation
given in \cite{Ingst-Sus-book}, sec. 4.1, but we focus specifically on the
fact that very similar Bayes-minimax problems are at the root of the
estimation and testing variants. For the theory underlying the Pinsker
constant cf. \cite{Bel-Lev}, \cite{Nu-survey}, \cite{Tsyb-book}.

For this exposition, we shall assume that observations (\ref{gwn-1}) are for
$j=1,\ldots,n$; we will thus assume $f\in\mathbb{R}^{n}$ and understand the
sets $\Sigma(\beta,M)$ and $B_{\rho}$ accordingly, i.e. they refer only to the
first $n$ coefficents of $f$. By $\left\Vert \cdot\right\Vert $ and
$\left\langle \cdot,\cdot\right\rangle $ we denote euclidean norm and inner
product in $\mathbb{R}^{n}$. Since most expressions will depend on $n$, for
this discussion we shall often suppress dependence on $n$ in the notation.
Assume that the radius $\rho$ tends to zero at the critical rate, that is
$\rho\asymp n^{-4\beta/(4\beta+1)}$. Let $\mathbb{R}_{+}^{n}=\left[
0,\infty\right)  ^{n}$; for a certain $d\in\mathbb{R}_{+}^{n}$, consider a
quadratic statistic of the form $\tilde{T}=n\sum_{j=1}^{n}d_{j}Y_{j}^{2}$.
Under $H_{0}$, we have $E_{0,n}\tilde{T}=\sum_{j=1}^{n}d_{j}$ and
$\mathrm{Var}_{0,n}\tilde{T}=2\left\Vert d\right\Vert ^{2}$. Since we will
work with the normalized test statistic, obtained by centering and dividing by
the standard deviation, it is obvious that we need only consider coefficients
$d$ fulfilling $\left\Vert d\right\Vert ^{2}=1$. Accordingly define, for such
coefficients $d$, the statistic
\begin{equation}
T=\frac{1}{\sqrt{2}}\left(  \tilde{T}-\sum_{j=1}^{n}d_{j}\right)  .
\label{Tn-tilde-def}%
\end{equation}
Under $H_{0}$, we now have $E_{0}T=0$ and $\mathrm{Var}_{0}T=1.$ We will
consider quadratic tests%
\begin{equation}
\psi_{d}=\mathbf{1}\left\{  T>z_{\alpha}\right\}  . \label{quad-tests-def}%
\end{equation}
A further condition on $d$ is imposed by requiring $d\in\mathcal{D}$, a set
which is defined for a given sequence $\delta=\left(  \log n\right)  ^{-1}$
as
\begin{equation}
\mathcal{D}=\{d\in\mathbb{R}_{+}^{n}:\left\Vert d\right\Vert ^{2}=1\text{ and
}\sup_{j}d_{j}^{2}\leq\delta/n\rho\}. \label{shrinking_coef}%
\end{equation}
For any test, we are interested in the worst case type II error under the
constraint $f\in\Sigma(\beta,M)\cap B_{\rho}$. A monotonicity argument shows
that for every $\psi_{d}$, this is attained when $\left\Vert f\right\Vert
^{2}$ is minimal, i.e. at $\left\Vert f\right\Vert ^{2}=\rho$. It follows that
for quadratic tests $\psi_{d}$, we may replace the restriction $f\in B_{\rho}$
by $f\in B_{\rho}^{\prime}$ where
\[
B_{\rho}^{\prime}=\{f\in\mathbb{R}^{n}:\rho\leq\left\Vert f\right\Vert
^{2}\leq2\rho\}.
\]
For $f\in\mathbb{R}^{n}$ we set $f^{2}:=\left(  f_{j}^{2}\right)  _{j=1}^{n}$.
For $d\in\mathcal{D}$ and $g\in\mathbb{R}_{+}^{n}$ define the functional
\[
L(d,g)=\frac{n}{\sqrt{2}}\left\langle d,g\right\rangle .
\]

\begin{lemma}
\label{lem-quad-tests-original}(a) Under $H_{0}$, we have $T\rightsquigarrow
N(0,1)$ uniformly over $d\in\mathcal{D}.$\newline(b) The statistic $T$ given
by (\ref{Tn-tilde-def}) fulfills
\[
T-L(d,f^{2})\rightsquigarrow N(0,1)
\]
uniformly over $d\in\mathcal{D}$ and $f\in B_{\rho}^{\prime}.$\newline(c)
Suppose $f$ is random such that $f_{j}\sim N\left(  0,\sigma_{j}^{2}\right)  $
for a certain $\sigma\in\mathbb{R}^{n}$. Then the statistic $T$ given by
(\ref{Tn-tilde-def}) fulfills%
\[
T-L(d,\sigma^{2})\rightsquigarrow N(0,1)
\]
uniformly over $d\in\mathcal{D}$ and $\sigma\in B_{\rho}^{\prime}$.
\end{lemma}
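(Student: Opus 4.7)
Each part of the lemma reduces to the triangular-array central limit theorem applied to a sum of independent, zero-mean contributions. The uniformity in $d\in\mathcal D$ (and $f$ or $\sigma$) will come from the fact that the Lyapunov bounds we obtain depend only on $\|d\|_\infty^2\le\delta/(n\rho)$ and on $\|f\|^2,\|\sigma\|^2\le 2\rho$.

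\emph{Part (a).} Under $H_0$ we have $Y_j=n^{-1/2}\xi_j$, so $nY_j^2=\xi_j^2$, and by definition of $T$,
\[
T=\frac{1}{\sqrt 2}\sum_{j=1}^n d_j(\xi_j^2-1).
\]
The summands are independent with mean $0$ and total variance $\|d\|^2=1$. Since $E|\xi_j^2-1|^3$ is a finite absolute constant, Lyapunov's third-moment ratio is bounded by $C\sum_j |d_j|^3\le C\,\|d\|_\infty\|d\|^2\le C(\delta/n\rho)^{1/2}$, which vanishes uniformly over $\mathcal D$ because $\delta=(\log n)^{-1}\to 0$ and $n\rho\to\infty$. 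Lyapunov's CLT then gives $T\rightsquigarrow N(0,1)$ uniformly.

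\emph{Part (b).} Expand $nY_j^2=nf_j^2+2n^{1/2}f_j\xi_j+\xi_j^2$ and substitute in (\ref{Tn-tilde-def}). The deterministic part is exactly $L(d,f^2)$, and the rest splits as
\[
T-L(d,f^2)=\sqrt{2n}\,\sum_j d_j f_j\xi_j+\frac{1}{\sqrt 2}\sum_j d_j(\xi_j^2-1).
\]
The second piece is the statistic of (a), hence converges to $N(0,1)$ uniformly. The first (linear-in-$\xi$) piece has mean zero and variance $2n\sum_j d_j^2 f_j^2\le 2n\|d\|_\infty^2\|f\|^2\le 2n\cdot(\delta/n\rho)\cdot 2\rho=4\delta\to 0$, uniformly over $d\in\mathcal D,\,f\in B_\rho'$. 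Therefore it is $o_P(1)$ uniformly, and Slutsky's lemma closes the argument.

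\emph{Part (c).} With $f_j\sim N(0,\sigma_j^2)$ independent of $\xi_j$, the convolution gives $\sqrt n\,Y_j\sim N(0,n\sigma_j^2+1)$, so one may write $nY_j^2=(n\sigma_j^2+1)Z_j^2$ for i.i.d.\ $Z_j\sim N(0,1)$. Substituting and using $E T=L(d,\sigma^2)$ yields
\[
T-L(d,\sigma^2)=\frac{1}{\sqrt 2}\sum_j d_j(n\sigma_j^2+1)(Z_j^2-1),
\]
again a sum of independent centered variables. The variance is $\sum_j d_j^2(n\sigma_j^2+1)^2=1+2n\sum_j d_j^2\sigma_j^2+n^2\sum_j d_j^2\sigma_j^4$; the first cross term is bounded by $2n\|d\|_\infty^2\|\sigma\|^2\le 4\delta$, and the second one by $2n\delta\sup_j\sigma_j^2$, which is the term that must be shown to be $o(1)$. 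Given variance convergence to $1$, Lyapunov's third-moment condition follows by the same Cauchy--Schwarz/$\|d\|_\infty$ argument as in (a).

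\textbf{Main obstacle.} The delicate point is the uniform variance control in (c): the quartic term $n^2\sum_j d_j^2\sigma_j^4$ is not automatically small under the bare condition $\sigma\in B_\rho'$, because $\sup_j\sigma_j^2$ can be as large as $2\rho$, and $n\rho\to\infty$. The margin available is the extra factor $\delta=(\log n)^{-1}$ built into $\mathcal D$; using $\sigma_j^4\le(\sup_j\sigma_j^2)\,\sigma_j^2$ the term is bounded by $2n\delta\sup_j\sigma_j^2$, so one needs the additional regularity $\sup_j\sigma_j^2=o(\log n/n)$ (naturally satisfied by the near least-favorable priors that will appear in the Bayes--minimax analysis of Section \ref{sec:Bayes-minimax}). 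Handling this bound uniformly -- possibly by restricting $\sigma$ to the relevant truncated prior set -- is the technical heart of the lemma; the remaining CLT steps are standard Lyapunov applications driven by $\|d\|_\infty\to 0$.
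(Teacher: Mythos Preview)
Your treatment of parts (a) and (b) is correct and essentially identical to the paper's: both verify a Lindeberg/Lyapunov-type infinitesimality condition driven by $\sup_j d_j^2\le\delta/(n\rho)=o(1)$, and in (b) both observe that the linear-in-$\xi$ piece has variance $2n\sum_j d_j^2 f_j^2\le 4\delta=o(1)$, so it is asymptotically negligible.

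For part (c), you have in fact caught a genuine error in the paper. The paper records the variance as $1+n\sum_j d_j^2(2\sigma_j^2+\sigma_j^4)$, i.e.\ with only a factor $n$ on the quartic term, and then bounds $n\sum d_j^2\sigma_j^4\le 2\rho\, n\sum d_j^2\sigma_j^2\le 4\rho\delta=o(1)$. Your representation $nY_j^2=(n\sigma_j^2+1)Z_j^2$ gives the correct variance $\sum_j d_j^2(n\sigma_j^2+1)^2=1+2n\sum d_j^2\sigma_j^2+n^2\sum d_j^2\sigma_j^4$, and with the right power $n^2$ the paper's bound no longer works. Indeed the lemma as stated is false: take $\sigma_1^2=\rho$, $\sigma_j=0$ for $j\ge 2$ (so $\sigma\in B_\rho'$) and $d_1^2=\delta/(n\rho)$; then the contribution of the first summand to $\mathrm{Var}(T)$ alone is $d_1^2(n\rho+1)^2\sim\delta n\rho\to\infty$, so $T-L(d,\sigma^2)$ cannot converge to $N(0,1)$ uniformly over $B_\rho'$.

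Your proposed resolution is the correct one. The lemma is only ever applied at the saddlepoint prior $\sigma_j^2=f_{0,j}^2=(\lambda-\mu j^{2\beta})_+$ of Lemma~\ref{lem-descrip-saddlepoint}, for which $\sup_j\sigma_j^2\le\lambda\asymp n^{-1-1/(4\beta+1)}$ by (\ref{order-lambda}), hence $n\sup_j\sigma_j^2\to 0$. Under the additional hypothesis $n\sup_j\sigma_j^2=o(1)$ one gets
\[
n^2\sum_j d_j^2\sigma_j^4\le \bigl(n\sup_j\sigma_j^2\bigr)\cdot n\sum_j d_j^2\sigma_j^2\le \bigl(n\sup_j\sigma_j^2\bigr)\cdot 2\delta=o(1),
\]
and the Lyapunov condition follows just as in (a) since the weights $d_j(n\sigma_j^2+1)$ satisfy $\sup_j d_j^2(n\sigma_j^2+1)^2\le(1+o(1))\sup_j d_j^2=o(1)$. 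So the right statement of (c) carries this extra restriction on $\sigma$, and with it all downstream uses in Section~\ref{sec:Bayes-minimax} go through unchanged.
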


Denote the expectation under the model of (c) by $E_{\sigma}^{\ast}$. The
lemma implies that for uniformly over $d\in\mathcal{D}$ and $f\in\left\{
0\right\}  \cup\left(  \Sigma(\beta,M)\cap B_{\rho}^{\prime}\right)  $
\begin{align}
E_{f}(1-\psi_{d})  &  =\Phi(z_{\alpha}-L(d,f^{2}%
))+o(1)\label{in-terms-of-Phi-1}\\
&  =E_{f}^{\ast}(1-\psi_{d})+o(1). \label{in-terms-of-Phi-2}%
\end{align}
\newline In particular, all quadratic tests $\psi_{d}$ with $d\in\mathcal{D}$
are aymptotic $\alpha$-tests under $H_{0}:f=0$. To characterize the worst case
error under the alternative $H_{a}:f\in\Sigma(\beta,M)\cap B_{\rho}$, we use
(\ref{in-terms-of-Phi-1}) and the strict monotonicity of $\Phi$ and look for a
saddlepoint of the functional $L(d,f^{2})$.

\begin{lemma}
\label{lem-saddle}For $n$ large enough, there exists a saddlepoint $d_{0}%
\in\mathcal{D},f_{0}\in\Sigma(\beta,M)\cap B_{\rho}^{\prime}$ of the
functional $L(d,f^{2})$ such that
\[
L(d,f_{0}^{2})\leq L(d_{0},f_{0}^{2})\leq L(d_{0},f^{2})\text{ }%
\]
for all $d\in\mathcal{D}$ and all $f\in\Sigma(\beta,M)\cap B_{\rho}^{\prime}$.
\end{lemma}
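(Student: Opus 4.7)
The functional $L(d,f^{2})=\frac{n}{\sqrt{2}}\sum_{j}d_{j}f_{j}^{2}$ is bilinear when reparametrized by $g_{j}=f_{j}^{2}\geq 0$. So the plan is to recast the problem as a minimax over two convex compact sets and invoke a standard saddle point theorem (Sion, or the bilinear version of Fan's theorem). Set
\[
\mathcal{G}=\Bigl\{g\in\mathbb{R}_{+}^{n}:\sum_{j=1}^{n}j^{2\beta}g_{j}\leq M,\ \rho\leq\sum_{j=1}^{n}g_{j}\leq 2\rho\Bigr\},
\]
which is precisely the image of $\Sigma(\beta,M)\cap B_{\rho}^{\prime}$ under $f\mapsto f^{2}$ (so $f_{0}:=(\sqrt{g_{0,j}})_{j}$ recovers a valid element of the original alternative). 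Let $\overline{\mathcal{D}}$ be the convex relaxation of $\mathcal{D}$ obtained by replacing the sphere constraint $\|d\|^{2}=1$ by the ball constraint $\|d\|^{2}\leq 1$; this is a convex compact subset of $\mathbb{R}_{+}^{n}$, while $\mathcal{G}$ is clearly convex compact.

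First I would verify that $\mathcal{G}\neq\emptyset$ for $n$ large enough: since $\rho=\rho_{n}\to 0$, one can take $g$ supported on indices $j\leq N$ with $N\asymp(M/\rho)^{1/(2\beta)}$, which eventually lies in $\{1,\dots,n\}$. Then, applying Sion's minimax theorem to $L(d,g)$ on $\overline{\mathcal{D}}\times\mathcal{G}$ — the function is bilinear and continuous, and both domains are convex and compact — yields $\max_{d\in\overline{\mathcal{D}}}\min_{g\in\mathcal{G}}L(d,g)=\min_{g\in\mathcal{G}}\max_{d\in\overline{\mathcal{D}}}L(d,g)$, and a saddlepoint $(d_{0},g_{0})\in\overline{\mathcal{D}}\times\mathcal{G}$ is attained by compactness.

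The main technical point — really the only place where something nontrivial must be checked — is to upgrade $d_{0}\in\overline{\mathcal{D}}$ to $d_{0}\in\mathcal{D}$, i.e.\ $\|d_{0}\|^{2}=1$. Here I would use positivity: since $g_{0}\in\mathcal{G}$ satisfies $\sum_{j}g_{0,j}\geq\rho>0$ there is some index $j_{0}$ with $g_{0,j_{0}}>0$; combined with the box constraint $d_{j_{0}}^{2}\leq\delta/(n\rho)$, the bound $\delta/(n\rho)<1$ (valid for large $n$ since $n\rho\to\infty$) leaves slack in the sphere constraint, so the linear functional $d\mapsto L(d,g_{0})$ can be strictly increased by rescaling any $d\in\overline{\mathcal{D}}$ with $\|d\|<1$ toward the sphere while respecting the box. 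Thus the maximum in $d$ is necessarily attained on $\|d\|^{2}=1$, giving $d_{0}\in\mathcal{D}$, and the saddlepoint inequalities then read in the required form. Setting $f_{0}=(\sqrt{g_{0,j}})_{j}\in\Sigma(\beta,M)\cap B_{\rho}^{\prime}$ completes the argument.

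I expect the routine parts to be the compactness/continuity verifications and the application of Sion; the one substantive step is the boundary argument ensuring $\|d_{0}\|=1$, which relies crucially on the constraint $\sum_{j}g_{0,j}\geq\rho$ and on $\delta/(n\rho)\to 0$ via the definition $\delta=(\log n)^{-1}$.
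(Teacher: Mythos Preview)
Your approach is essentially the paper's own: relax $\mathcal{D}$ to the convex ball $\overline{\mathcal{D}}$, reparametrize $g=f^{2}$ to get the convex compact set $\mathcal{G}$, apply the standard minimax theorem, then push the saddle $d_{0}$ back onto the sphere. The paper does exactly this, in the same order.

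One point in your boundary step needs repair. ``Rescaling any $d\in\overline{\mathcal{D}}$ with $\|d\|<1$ toward the sphere'' does \emph{not} in general respect the box: if some coordinate $d_{j}$ already sits at $\sqrt{\delta/(n\rho)}$, scalar multiplication by $c>1$ violates the constraint. Moreover, the claim that the maximum over $\overline{\mathcal{D}}$ of $L(\cdot,g_{0})$ is necessarily attained on $\|d\|=1$ is false for some $g_{0}\in\mathcal{G}$ (e.g.\ $g_{0}$ supported on a single index, where the maximizer has $\|d\|^{2}=\delta/(n\rho)<1$). The paper avoids this by a \emph{domination} argument instead: since the box permits total squared norm up to $n\cdot\delta/(n\rho)=\delta/\rho\to\infty>1$, every $\tilde d\in\overline{\mathcal{D}}$ with $\|\tilde d\|<1$ admits a componentwise increase to some $d\in\mathcal{D}$ still satisfying the box. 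Then $L(\tilde d,g)\leq L(d,g)$ for all $g\geq 0$, and from any saddle $(\tilde d_{0},g_{0})$ on $\overline{\mathcal{D}}\times\mathcal{G}$ one obtains a saddle $(d_{0},g_{0})$ on $\mathcal{D}\times\mathcal{G}$. Replace your rescaling step by this componentwise-increase argument and the proof goes through.
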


The normal distribution on the signal $f$ postulated in (c) will be
interpreted as a prior distribution. The next result shows that the Bayesian
tests in this context are quadratic tests $\psi_{d}$, and in particular, if
the $\sigma^{2}$ is taken at the saddlepoint ($\sigma_{0}^{2}=f_{0}^{2}$) then
$d\in\mathcal{D}$, i.e. it fulfills the infinitesimality condition $d_{j}%
^{2}\leq\delta/n\rho$.

\begin{lemma}
\label{lem-Bayes-test}(a) For any $\sigma^{2}\in\mathbb{R}_{+}^{n}$, the
Neyman-Pearson $\alpha$-test for simple hypotheses
\begin{align*}
H_{0}  &  :Y_{j}\sim N(0,n^{-1}),j=1,\ldots,n\;\;\;\text{ vs.}\\
H_{a}^{\ast}  &  :Y_{j}\sim N(0,\sigma_{j}^{2}+n^{-1}),j=1,\ldots,n
\end{align*}
\newline is equivalent to a quadratic test of form $\psi_{d}=\mathbf{1}%
\left\{  T>t\right\}  $ where $T=\sum_{j=1}^{n}d_{j}Y_{j}^{2}$, $d\in
\mathbb{R}_{+}^{n}$, $\left\Vert d\right\Vert =1$. \newline(b) If $\sigma
^{2}=f_{0}^{2}$ then the pertaining $d$ is in $\mathcal{D}$ for $n$ large
enough, and $t\rightarrow z_{\alpha}$.
\end{lemma}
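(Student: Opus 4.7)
My plan is to handle part (a) by a direct Neyman-Pearson likelihood-ratio computation and part (b) by combining the explicit saddlepoint structure from Lemma \ref{lem-saddle} with the central limit result of Lemma \ref{lem-quad-tests-original}(a). Part (a) is essentially a standard calculation: since the $Y_{j}$ are independent centered Gaussians with variance $n^{-1}$ under $H_{0}$ and $\sigma_{j}^{2}+n^{-1}$ under $H_{a}^{\ast}$, one computes
\[
\log\frac{dP_{H_{a}^{\ast}}}{dP_{H_{0}}}(Y)=-\tfrac{1}{2}\sum_{j=1}^{n}\log(1+n\sigma_{j}^{2})+\tfrac{1}{2}\sum_{j=1}^{n}\tilde{d}_{j}\,Y_{j}^{2},\qquad\tilde{d}_{j}:=\frac{n^{2}\sigma_{j}^{2}}{1+n\sigma_{j}^{2}}.
\]
The first sum is deterministic, so the Neyman-Pearson test rejects when $\sum_{j}\tilde{d}_{j}Y_{j}^{2}$ exceeds a constant. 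Since $\tilde{d}\in\mathbb{R}_{+}^{n}$, normalizing $d:=\tilde{d}/\|\tilde{d}\|$ produces a unit-norm vector in $\mathbb{R}_{+}^{n}$ and rewrites the test as $\psi_{d}=\mathbf{1}\{T>t\}$ with $T$ as in (\ref{Tn-tilde-def}) after centering and scaling.

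For part (b), apply (a) with $\sigma^{2}=f_{0}^{2}$ and extract the precise shape of $f_{0}$ from Lemma \ref{lem-saddle}. The $f$-minimization under the constraints $\|f\|^{2}\ge\rho$ and $\sum_{j}j^{2\beta}f_{j}^{2}\le M$ is a Pinsker-type Lagrangian problem whose solution has the water-level form $f_{0,j}^{2}=C(1-(j/N)^{2\beta})_{+}$, with both constraints active, while the $d$-maximization (with the $\mathcal{D}$-infinitesimality inactive, as one then verifies a posteriori) yields $d_{0}\propto f_{0}^{2}$. Matching the two active constraints simultaneously gives
\[
N\asymp(M/\rho)^{1/(2\beta)}\asymp n^{2/(4\beta+1)},\qquad C\asymp\rho/N.
\]
Since $nf_{0,j}^{2}\asymp n\rho/N\asymp(n\rho)^{-1}\to0$ uniformly in $j\le N$, one has $\tilde{d}_{j}\sim n^{2}f_{0,j}^{2}$, and an elementary computation of the sum $\|\tilde{d}\|^{2}\asymp n^{4}\rho^{2}/N$ together with $\max_{j}\tilde{d}_{j}\asymp n^{2}\rho/N$ yields $\max_{j}d_{j}^{2}\asymp 1/N\asymp 1/(n\rho)^{2}$. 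The required infinitesimality bound $\max_{j}d_{j}^{2}\le\delta/(n\rho)=1/(n\rho\log n)$ therefore reduces to $n\rho\gg\log n$, which holds because $n\rho\asymp n^{1/(4\beta+1)}$; hence $d\in\mathcal{D}$ for $n$ large.

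Finally, $t=t_{n}$ is determined by the exact $\alpha$-level condition $P_{0}(T>t_{n})=\alpha$, and since $d\in\mathcal{D}$ by the previous step, Lemma \ref{lem-quad-tests-original}(a) gives $T\rightsquigarrow N(0,1)$ under $H_{0}$, which forces $t_{n}\to z_{\alpha}$ by continuity of the Gaussian quantile function. The main obstacle is isolating the saddlepoint structure of Lemma \ref{lem-saddle} precisely enough to control $\max_{j}d_{j}^{2}$; once the water-level shape of $f_{0}$ and the scaling $N\asymp(n\rho)^{2}$ are in hand, the rest is bookkeeping, with the factor $\log n$ built into $\delta$ providing generous slack in the infinitesimality condition.
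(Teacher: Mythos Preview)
Your proposal is correct and follows essentially the same route as the paper: the likelihood-ratio computation in (a) and, in (b), the use of the water-level form of $f_{0}$ together with the orders $N\asymp n^{2/(4\beta+1)}$ and $\max_{j}f_{0,j}^{2}\asymp\rho/N$ to verify $d\in\mathcal{D}$, then Lemma~\ref{lem-quad-tests-original}(a) for $t\to z_{\alpha}$. The only cosmetic difference is that the paper packages the saddlepoint shape as $f_{0,j}^{2}=(\lambda-\mu j^{2\beta})_{+}$ with $\lambda\asymp n^{-1-1/(4\beta+1)}$ (Lemma~\ref{lem-descrip-saddlepoint}) rather than your $(C,N)$ parameterization, but the resulting order computation $\max_{j}d_{j}^{2}\asymp 1/N\asymp(n\rho)^{-2}\ll\delta/(n\rho)$ is identical.
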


Part (b) implies that
\begin{equation}
\inf_{\phi:E_{0}\phi\leq\alpha}E_{f_{0}}^{\ast}(1-\phi)=\inf_{d\in\mathcal{D}%
}E_{f_{0}}^{\ast}(1-\psi_{d})+o(1). \label{bayes-tests-are-linear}%
\end{equation}
We are now ready to present the essence of the argument underlying the result
of Ermakov \cite{Ermak-90}. Recall that $\pi_{n}(\alpha,\rho,\beta,M)$ denotes
the minimax type II error over all $\alpha$-tests. Denote the value of
$L(d,f^{2})$ at the saddlepoint
\begin{equation}
L_{0}:=L(d_{0},f_{0}^{2})=\sup_{d\in\mathcal{D}}\inf_{f\in\Sigma(\beta,M)\cap
B_{\rho}^{\prime}}L_{n}(d,f^{2})=\inf_{f\in\Sigma(\beta,M)\cap B_{\rho
}^{\prime}}\sup_{d\in\mathcal{D}}L_{n}(d,f^{2}). \label{value-game}%
\end{equation}
We begin with an $\alpha^{\prime}>\alpha$ such that asymptotic $\alpha$-tests
are $\alpha^{\prime}$-tests for $n$ large enough. Then
\begin{align}
\pi_{n}(\alpha^{\prime},\rho,\beta,M)  &  =\inf_{\phi:E_{0}\phi\leq
\alpha^{\prime}}\sup_{f\in\Sigma(\beta,M)\cap B_{\rho}}E_{f}\left(
1-\phi\right) \label{begin-reasoning}\\
&  \leq\inf_{d\in\mathcal{D}}\sup_{f\in\Sigma(\beta,M)\cap B_{\rho}}%
E_{f}(1-\psi_{d})\nonumber\\
&  =\inf_{d\in\mathcal{D}}\sup_{f\in\Sigma(\beta,M)\cap B_{\rho}^{\prime}%
}E_{f}(1-\psi_{d})\nonumber\\
&  =\inf_{d\in\mathcal{D}}\sup_{f\in\Sigma(\beta,M)\cap B_{\rho}^{\prime}}%
\Phi(z_{\alpha}-L_{n}(d,f^{2}))+o(1)\text{ [relation (\ref{in-terms-of-Phi-1}%
)]}\nonumber\\
&  =\Phi(z_{\alpha}-L_{n}(d_{0},f_{0}^{2}))+o(1)\text{ [monotonicity of }%
\Phi\text{ and (\ref{value-game})]}\nonumber\\
&  =\inf_{d\in\mathcal{D}}E_{f_{0}}^{\ast}(1-\psi_{d})+o(1)\text{ [relation
(\ref{in-terms-of-Phi-2})]}\nonumber\\
&  =\inf_{\phi:E_{0}\phi\leq\alpha}E_{f_{0}}^{\ast}(1-\phi)+o(1)\text{
[relation (\ref{bayes-tests-are-linear})]. }\nonumber
\end{align}
The main term of the last expression is the Bayes risk for a prior
distribution $f_{j}\sim N(0,f_{0j}^{2})$ in the original model $Y_{j}\sim
N\left(  f_{j},n^{-1}\right)  $. Since $f_{0}\in\Sigma(\beta,M)\cap B_{\rho
}^{\prime}$ and is extremal there, it fulfills
\[
\sum_{j=1}^{n}f_{0j}^{2}j^{2\beta}=M\text{, }\sum_{j=1}^{n}f_{0j}^{2}=\rho
\]
(see the precise description of the saddlepoint $\left(  d_{0},f_{0}\right)  $
in Lemma \ref{lem-descrip-saddlepoint} below). It can therefore be shown that
(as in the original Pinsker \cite{pinsk-80} result) that this prior
distribution asymptotically concentrates on every set of the form
$\Sigma(\beta,M(1+\varepsilon))\cap B_{\rho(1-\varepsilon)}^{\prime}$ for
$\varepsilon>0$. A standard reasoning by truncation shows that in this case,
for a certain probability measure $G$ strictly concentrated on $\Sigma
(\beta,M(1+\varepsilon))\cap B_{\rho(1-\varepsilon)}^{\prime}$
\[
\inf_{\phi:E_{0}\phi\leq\alpha}E_{f_{0}}^{\ast}(1-\phi)\leq\inf_{\phi
:E_{0}\phi\leq\alpha}\int E_{f}(1-\phi)dG(f)+o(1).
\]
However, by the relation between Bayes and minimax risk
\begin{equation}
\inf_{\phi:E_{0}\phi\leq\alpha}\int E_{f}(1-\phi)dG(f)\leq\pi_{n}(\alpha
,\rho(1-\varepsilon),\beta,M(1+\varepsilon)). \label{from-bayes-to-minimax}%
\end{equation}
Summarizing (\ref{begin-reasoning})-(\ref{from-bayes-to-minimax}) we have
obtained for every $\varepsilon>0$%
\[
\pi_{n}(\alpha(1+\varepsilon),\rho,\beta,M)\leq\Phi(z_{\alpha}-L_{n}%
(d_{0},f_{0}^{2}))+o(1)\leq\pi_{n}(\alpha,\rho(1-\varepsilon),\beta
,M(1+\varepsilon))+o(1)
\]
Below in Lemma \ref{lem-L0-value} is it shown that if $\rho=c\cdot
n^{-4\beta/(4\beta+1)}$, $c$ constant then
\[
L(d_{0},f_{0}^{2})\sim\sqrt{A_{0}M^{-1/(2\beta)}c^{2+1/(2\beta)}/2}.
\]
Since the right side is continuous in $M$ and $c$ , the result of Proposition
\ref{propos-ermak-90} follows.

\section{Proof of Theorem \ref{adaptationimpossible}%
\label{sec:adaptation-impossible}}

For brevity we write $A_{i}=A(c,\beta,M_{i}),i=1,2$ in this section. Assume
there exists a test $\phi_{n}$ not depending on on $M$ such that
\begin{align}
&  E_{0,n}\phi_{n}\leq\alpha+o(1),\label{upperbound}\\
\sup_{f\in\Sigma(\beta,M_{i})\cap B_{\rho}}  &  E_{f,n}(1-\phi_{n})\leq
\Phi(z_{\alpha}-\sqrt{A_{i}/2})+o(1), \label{power}%
\end{align}
for $i=1$ or $2$. Let $G_{n,M_{i}}$ be the Gaussian prior for $f$ with
$f_{j}\sim N(0,\sigma_{j}^{\ast2})$ independently, where
\[
\sigma_{j}^{\ast2}(M_{i})=(\lambda-\mu j^{2\beta})_{+},\ \ j=1,2,\ldots
\]
and where $\lambda$ and $\mu$ are determined by
\[
\sum j^{2\beta}\sigma_{j}^{\ast2}=M_{i}\quad\text{ and }\quad\sum\sigma
_{j}^{\ast2}=\rho.
\]
It can be shown that $G_{n,M_{i}}$ asymptotically concentrates on
$\Sigma(\beta,M_{i}\left(  1+\varepsilon\right)  )\cap B_{\rho\left(
1-\varepsilon\right)  }^{\prime}$ for any small $\varepsilon>0$. Then
\[
\sup_{\Sigma(\beta,M_{i}\left(  1+\varepsilon\right)  )\cap B_{\rho\left(
1-\varepsilon\right)  }^{\prime}}E_{f,n}(1-\phi_{n})\geq(1+o(1))\cdot\int
E_{f,n}(1-\phi_{n})\,G_{n,M_{i}}(df).
\]
Recall $Y_{j}=f_{j}+n^{-1/2}\xi_{j}$. Let the joint distributions of
$(Y_{j})_{0}^{\infty}$ under the priors $G_{n,0},G_{n,M_{1}}$ and $G_{n,M_{2}%
}$ be $Q_{0,n},Q_{1,n}$ and $Q_{2,n}$, respectively, i.e.,
\begin{align*}
&  Q_{0,n}:Y_{j}\sim N(0,n^{-1}),\ j=1,2,\ldots\\
&  Q_{1,n}:Y_{j}\sim N(0,n^{-1}+\sigma_{j}^{\ast2}(M_{1})),\ j=1,2,\ldots\\
&  Q_{2,n}:Y_{j}\sim N(0,n^{-1}+\sigma_{j}^{\ast2}(M_{2})),\ j=1,2,\ldots
\end{align*}
Therefore,
\begin{align*}
&  E_{Q_{0,n}}\phi_{n}=E_{0,n}\phi_{n},\\
&  E_{Q_{i,n}}(1-\phi_{n})=\int E_{f,n}(1-\phi_{n})\,G_{n,M_{i}}%
(df),\ \ i=1,2.
\end{align*}
Combining these with (\ref{power}) and (\ref{upperbound}) gives%
\[
E_{Q_{0,n}}\phi_{n}\leq\alpha+o(1),
\]%
\begin{align*}
E_{Q_{i,n}}(1-\phi_{n})  &  \leq\Phi\left(  z_{\alpha}-\sqrt{A_{i}/2}\right)
\\
&  +\left\vert \sup_{f\in\Sigma(\beta,M_{i}\left(  1+\varepsilon\right)  )\cap
B_{\rho\left(  1-\varepsilon\right)  }^{\prime}}E_{f,n}(1-\phi_{n})-\sup
_{f\in\Sigma(\beta,M_{i})\cap B_{\rho}}E_{f,n}(1-\phi_{n})\right\vert +o(1).
\end{align*}
Note that $E_{f,n}(1-\phi_{n})$ is continuous in $f$. Since $\varepsilon$ can
be arbitrarily small, we have%
\[
E_{Q_{i,n}}(1-\phi_{n})\leq\Phi\left(  z_{\alpha}-\sqrt{A_{i}/2}\right)
+o(1),\ \ i=1,2.
\]
The likelihood ratio of $Q_{i,n}$ against $Q_{0,n}$ is
\begin{align*}
\frac{dQ_{i,n}}{dQ_{0,n}}  &  =\exp\left(  -\frac{1}{2}\sum_{j}\left(
\frac{Y_{j}^{2}}{n^{-1}+\sigma_{j}^{\ast2}(M_{i})}-\frac{Y_{j}^{2}}{n^{-1}%
}\right)  \right)  \cdot\prod_{j}\left(  \frac{n^{-1}}{n^{-1}+\sigma_{j}%
^{\ast2}(M_{i})}\right)  ^{1/2}\\
&  =\exp\left(  \frac{1}{2}\sum_{j}\frac{n^{2}\sigma_{j}^{\ast2}(M_{i}%
)}{1+n\sigma_{j}^{\ast2}(M_{i})}Y_{j}^{2}\right)  \cdot\prod_{j}\left(
\frac{n^{-1}}{n^{-1}+\sigma_{j}^{\ast2}(M_{i})}\right)  ^{1/2}.
\end{align*}
Therefore, by the factorization theorem, it is seen that the bivariate vector
\[
T_{n}=\left(  \sum_{j}\frac{n^{2}\sigma_{j}^{\ast2}(M_{1})(Y_{j}^{2}-n^{-1}%
)}{(1+n\sigma_{j}^{\ast2}(M_{1}))\sqrt{2n^{2}\sum_{k}\sigma_{k}^{\ast4}%
(M_{1})}},\sum\frac{n^{2}\sigma_{j}^{\ast2}(M_{2})(Y_{j}^{2}-n^{-1}%
)}{(1+n\sigma_{j}^{\ast2}(M_{2}))\sqrt{2n^{2}\sum_{k}\sigma_{k}^{\ast4}%
(M_{2})}}\right)
\]
is a sufficient statistic for the family of distributions $\{Q_{0,n}%
,Q_{1,n},Q_{2,n}\}$. Write the induced family for $T_{n}$ as $\{Q_{0,n}%
^{T},Q_{1,n}^{T},Q_{2,n}^{T}\}$ and take the conditional expectation $\phi
_{n}^{\ast}(T_{n})=E_{Q_{i,n}}(\phi_{n}|T_{n})$. By sufficiency the (possibly
randomized) test $\phi_{n}^{\ast}(T_{n})$ for $\{Q_{0,n}^{T},Q_{1,n}%
^{T},Q_{2,n}^{T}\}$ is as good as $\phi_{n}$ (cf. for instance Theorem 4.66 in
\cite{Liese-book}), that is
\begin{align}
&  E_{Q_{0,n}^{T}}\phi_{n}^{\ast}=E_{0,n}\phi_{n}\leq\alpha+o(1),\\
&  E_{Q_{i,n}^{T}}(1-\phi_{n}^{\ast})=E_{Q_{1,n}}\phi_{n}\leq\Phi(z_{\alpha
}-\sqrt{A_{i}/2})+o(1),\ \ i=1,2. \label{UMPat}%
\end{align}
Then we have the following lemma, which is proved later.

\begin{lemma}
\label{lem:LimitExp}Under $\{Q_{0,n},Q_{1,n},Q_{2,n}\}$, the law of the
statistic $T_{n}$ converges in total variation to $N(0,\Sigma)$, $N(\mu
_{1},\Sigma)$ and $N(\mu_{2},\Sigma)$ respectively, where
\begin{align}
&  \mu_{1}=(\sqrt{A_{1}/2},r\sqrt{A_{1}/2})^{\prime},\nonumber\\
&  \mu_{2}=(r\sqrt{A_{2}/2},\sqrt{A_{2}/2})^{\prime},\nonumber\\
&  \Sigma=\left(
\begin{array}
[c]{cc}%
1 & r\\
r & 1
\end{array}
\right)  ,\nonumber\\
&  r=\left(  \frac{M_{1}}{M_{2}}\right)  ^{1/(4\beta)}\cdot\frac
{4\beta+1-M_{1}/M_{2}}{4\beta}.
\end{align}

\end{lemma}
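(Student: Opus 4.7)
The plan is to express $T_n$ under each $Q_{k,n}$ as a deterministic shift plus a standardized weighted sum of independent centered chi-square variables, verify a bivariate CLT for that sum, and then upgrade weak convergence to total variation using the smoothness of chi-square densities. Writing $Y_j = \sigma_{j,k}\xi_j$ with $\sigma_{j,k}^2 = n^{-1}+\sigma_j^{*2}(M_k)$, $\sigma_j^{*2}(M_0):=0$, and $\xi_j \sim N(0,1)$ i.i.d. under $Q_{k,n}$, each coordinate of $T_n$ decomposes as $T_n^{(i)} = m_{k,n}^{(i)} + \sum_j w_{n,j}^{(i,k)}(\xi_j^2-1)$, with $m_{k,n}^{(i)}$ an explicit deterministic shift and $w_{n,j}^{(i,k)}$ explicit weights. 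The bivariate CLT then reduces to a CLT for weighted sums of i.i.d.\ $\chi_1^2-1$.

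Second, I would compute the asymptotic mean vector and covariance matrix. At the separation rate one has $n\sigma_j^{*2}(M_i) = O(n^{-1/(4\beta+1)})$ uniformly in $j$, so the factors $(1+n\sigma_j^{*2}(M_i))^{-1}$ and the $n^{-1}$ term inside $\sigma_{j,k}^2$ contribute only $1+o(1)$ corrections. Using the Pinsker form $\sigma_j^{*2}(M_i) = (\lambda_i - \mu_i j^{2\beta})_+$ with cutoff $N_i = ((4\beta+1)M_i/\rho)^{1/(2\beta)}$, Riemann sum approximations yield $n^2\sum_j\sigma_j^{*4}(M_i) \to A_i$, so that the diagonal contributions of $\mu_i$ are $\sqrt{A_i/2}$, matching $A(c,\beta,M_i)$ via the same calculation that underlies Ermakov's constant. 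The off-diagonal correlation is obtained (assuming WLOG $M_1 \le M_2$) by evaluating the truncated integral on $[0,N_1]$:
\begin{equation*}
\frac{\sum_j\sigma_j^{*2}(M_1)\sigma_j^{*2}(M_2)}{\bigl(\sum_l\sigma_l^{*4}(M_1)\sum_l\sigma_l^{*4}(M_2)\bigr)^{1/2}} \longrightarrow \Bigl(\frac{M_1}{M_2}\Bigr)^{1/(4\beta)}\cdot\frac{4\beta+1-M_1/M_2}{4\beta} = r,
\end{equation*}
which reproduces the entries of $\Sigma$ as well as the off-diagonal components of $\mu_1$ and $\mu_2$.

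Third, I would apply the bivariate Lindeberg-Feller CLT to the weighted sum $\sum_j w_{n,j}^{(\cdot,k)}(\xi_j^2-1)$. The Lindeberg (or Lyapunov) condition reduces to the infinitesimality $\max_j\sigma_j^{*4}(M_i)/\sum_l\sigma_l^{*4}(M_i) \to 0$, which follows from the smoothly decreasing Pinsker weights and the fact that $N_i\to\infty$ polynomially in $n$. This yields weak convergence of $T_n$ to $N(0,\Sigma), N(\mu_1,\Sigma), N(\mu_2,\Sigma)$ under $Q_{0,n}, Q_{1,n}, Q_{2,n}$, respectively.

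The main obstacle is upgrading weak convergence to total variation, which is essential here because Lemma \ref{lem:LimitExp} is used downstream to transport the level/power bounds on $\phi_n^*$ to the limiting Gaussian experiment. Because the summands $\xi_j^2-1$ have smooth (shifted chi-square) densities on a half-line and the weight vectors satisfy the infinitesimality condition just noted, the bivariate density of $T_n$, being a convolution of many absolutely continuous densities, inherits enough smoothness for a local CLT. My plan is to invoke Petrov's multivariate local central limit theorem (cf.\ Petrov, \emph{Sums of Independent Random Variables}, Ch.\ VII): under Lindeberg plus density smoothness, the density of the standardized sum converges uniformly to the bivariate Gaussian density, and Scheffé's lemma then delivers TV convergence. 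A second, more quantitative route, in case the bivariate local CLT requires delicate bookkeeping of the support, is to apply Stein-Malliavin bounds for quadratic functionals of a Gaussian vector (Nourdin-Peccati), which yield $d_{\mathrm{TV}}(T_n, N(\mu_k,\Sigma)) \lesssim \bigl(\max_j (w_{n,j}^{(\cdot,k)})^2\bigr)^{1/2} \to 0$ directly, thereby establishing the lemma in all three cases.
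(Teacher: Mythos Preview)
Your proposal is correct and follows essentially the same route as the paper: decompose $T_n$ as a shift plus a weighted sum of centered $\chi_1^2$ variables, identify the limiting mean and covariance through Riemann-sum asymptotics for $\sum_j\sigma_j^{*2}(M_i)\sigma_j^{*2}(M_k)$ (this is exactly how the paper obtains $r$), and then upgrade weak convergence to total variation via density convergence and Scheff\'e's lemma. The only difference is in packaging the last step: the paper carries out the Fourier inversion by hand---writing $p_n(x)=\frac{1}{2\pi}\int e^{-itx}\prod_j g(\cdot)\,dt$ with $g$ the characteristic function of a standardized $\chi_1^2-1$, and using dominated convergence on the integral to get pointwise density convergence---whereas you invoke Petrov's local CLT (which is the same argument, pre-packaged) or, alternatively, the Nourdin--Peccati Stein bound for second-chaos functionals. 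The Fourier/Petrov route is exactly what the paper does; the Nourdin--Peccati route is a genuinely different and more quantitative alternative that bypasses density convergence altogether and would also work here since $T_n$ is a quadratic form in Gaussians.
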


Then by the weak compactness theorem (c.f. \cite{Lehm-roman-book}, A.5.1 ),
there exists a test $\phi^{\ast}$ and a subsequence $\phi_{n_{k}}^{\ast}$ such
that $\phi_{n_{k}}^{\ast}$ converges weakly to $\phi^{\ast}$. Thus
\begin{align*}
&  E_{Q_{0,n}^{T}}\phi^{\ast}\leq\alpha,\\
&  E_{Q_{i,n}^{T}}(1-\phi^{\ast})\leq\Phi(z_{\alpha}-\sqrt{A_{i}%
/2}),\ \ i=1,2.
\end{align*}
For $i=1,2$ respectively, by the Neyman-Pearson lemma and some direct
calculations, the right hand side of the previous inequality is the type II
error of the uniformly most powerful test for $N(0,\Sigma)$ against $N(\mu
_{i},\Sigma)$. Therefore, $\phi^{\ast}$ is a uniformly most powerful test for
$N(0,\Sigma)$ against $\{N(\mu_{1},\Sigma),N(\mu_{2},\Sigma)\}$.

Note that $r$ in Lemma \ref{lem:LimitExp} is monotone increasing with respect
to $M_{1}/M_{2}$, and then $0<r<1$ for $M_{2}>M_{1}>0$. Thus, $\mu_{1}$,
$\mu_{2}$ and the origin are not on the same line. For $i=1,2$ respectively,
the log-likelihood ratio for $N(\mu_{i},\Sigma)$ against $N(0,\Sigma)$ is
$T^{\prime-1}\mu_{i}=T_{i}\cdot A_{i}$. Then by the necessity part of the
Neyman-Pearson lemma (\cite{Lehm-roman-book}, Theorem 3.2.1), the uniformly
most powerful test for $N(0,\Sigma)$ against $N(\mu_{i},\Sigma)$ has the form
of $\mathbf{1}\{T_{i}>k_{i}\}$. But since these two types of tests can never
coincide, there is no uniformly most powerful test for $N(0,\Sigma)$ against
$\{N(\mu_{1},\Sigma),N(\mu_{2},\Sigma)\}$. By this contradiction, Theorem
\ref{adaptationimpossible} is proved.

\begin{proof}
[Proof of Lemma \textbf{\ref{lem:LimitExp}}]For simplicity, we only show the
result for the first coordinate of $T_{n}$. The proof can be extended to
$T_{n}$ naturally. Under $Q_{0,n}$, the characteristic function of
$\frac{n(Y_{j}^{2}-1/n)}{\sqrt{2}}\sim N(0,1)$ is $g(t)=\exp(-t^{2}/2)$. Note
$g(t)=1-\frac{1}{2}t^{2}+o(t^{2})$, as $t\rightarrow0$ and $\int|g(t)|<\infty
$. The density of $T_{n,1}$ can be written as
\[
p_{n}(x)=\frac{1}{2\pi}\int e^{-itx}\prod g\left(  \frac{\sigma_{j}^{\ast
2}(M_{1})\cdot t}{(1+n\sigma_{j}^{\ast2}(M_{1}))\sqrt{\sum_{k}\sigma_{k}%
^{\ast4}(M_{1})}}\right)  ,
\]
where, by Levy's continuity theorem, the integrand converges to $e^{-itx}%
\exp\{-t^{2}/2\}$. By splitting the integral into two parts and using
dominated convergence, it can be shown that the integral converges to
\[
\frac{1}{2\pi}\int e^{-itx}e^{-t^{2}/2}\,dt=\frac{e^{-x^{2}/2}}{\sqrt{2\pi}}.
\]
Then an application of Scheff\'{e}'s theorem (cf. \cite{VdVaart-book}, 2.30)
establishes convergence in total variation. The correlation $r$ can be
calculated directly.
\end{proof}

\begin{privatenotes}
\begin{align*}
r&  =\frac{\sum\sigma_{j}^{*2}(M_{1}) \sigma_{j}^{*2}(M_{2})}{\sqrt
{\sum\sigma_{j}^{*4}(M_{1})\cdot
\sum\sigma_{j}^{*4}(M_{2})}}\nonumber\\
&  \sim\frac{\left(  \frac{N(M_{1})}{N(M_{2})}\right)  ^{1/2}\frac{1}%
{N(M_{1})}\sum\left(  1-(j/N(M_{1}))^{2\beta}\right)  _{+}\left(
1-(j/N(M_{2}))^{2\beta}\right)  }{\left(  \frac{1}{N(M_{1})}\sum
(1-(j/N(M_{1}))^{2\beta})_{+}\cdot\frac{1}{N(M_{2})}\sum(1-(j/N(M_{1}%
))^{2\beta})_{+}\right)  ^{1/2}}\nonumber\\
&  \sim\frac{(M_{1}/M_{2})^{1/(4\beta)}\int_{0}^{\infty}(1-t^{2\beta}%
)_{+}(1-M_{1}t^{2\beta}/M_{2})_{+}dt}{\int_{0}^{\infty}(1-t^{2\beta})_{+}%
\,dt}\nonumber\\
\end{align*}
\end{privatenotes}


\section{Proof of Theorem \ref{sharpadaptation}\label{sec:sharp-adaptation}}

Choose $\tilde{N}$ and $\gamma_{n}=o(1)$ such that
\begin{equation}
\gamma_{n}^{1/2\beta}\cdot n^{2/(4\beta+1)}\gg\tilde{N}\gg c_{n}^{-1/2\beta
}\cdot n^{2/(4\beta+1)}, \label{tildeN}%
\end{equation}
e.g. $\gamma_{n}=c_{n}^{-1/2}$, $\tilde{N}=c_{n}^{-1/3\beta}\cdot
n^{2/(4\beta+1)}$. Define
\begin{align*}
M_{0}  &  =M_{0}(f)=\sum_{j=1}^{\tilde{N}}j^{2\beta}f_{j}^{2}+\gamma_{n},\\
N  &  =N(M_{0})=\left(  \frac{(4\beta+1)M_{0}}{\rho}\right)  ^{1/2\beta},\\
\tilde{\lambda}  &  =\tilde{\lambda}(M_{0})=\frac{2\beta+1}{2\beta}\left(
\frac{1}{M_{0}(4\beta+1)}\right)  ^{1/(2\beta)}\rho^{(2\beta+1)/2\beta},\\
\tilde{d}_{j}^{{}}  &  =\tilde{d}_{j}(M_{0})=\tilde{\lambda}[1-(j/N)^{2\beta
}]_{+},
\end{align*}
which all depend on the unknown $f$. Define the oracle statistic
\[
T_{n}^{\ast}=\frac{n^{2}\sum_{j}\tilde{d}_{j}(M_{0})Y_{j}^{2}-n\sum_{j}%
\tilde{d}_{j}(M_{0})}{\sqrt{2n^{2}\sum_{j}\tilde{d}_{j}^{2}(M_{0})}},
\]
and the oracle test $\phi_{n}^{\ast}=\mathbf{1}\{T_{n}^{\ast}>z_{\alpha}\}$.
The following lemma holds; it is proved later.

\begin{lemma}
Under the assumptions of Theorem \ref{sharpadaptation}, the oracle test
$\phi_{n}^{\ast}$ is an asymptotic $\alpha$-test and
\[
\limsup_{n}\frac{1}{c_{n}^{2+1/(2\beta)}}\log\Psi(\phi_{n}^{\ast},\rho
_{n},\beta,M)\leq-\frac{A_{0}(\beta)M^{-1/2\beta}}{4}%
\]
\label{lem:oracle}
\end{lemma}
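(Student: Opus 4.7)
The plan is to establish the two assertions of the lemma separately. For the type~I bound, I would observe that under $H_0$ one has $f = 0$, so $M_0(0) = \gamma_n$ is a deterministic constant and the weights $\tilde{d}_j$ are non-random; setting $U_j := n^{1/2} Y_j$, which are i.i.d.\ $N(0,1)$ under $H_0$, the statistic becomes $T_n^* = \sum_j a_j (U_j^2 - 1)$ with normalised coefficients $a_j = \tilde{d}_j / \sqrt{2\sum_k \tilde{d}_k^2}$. Since the Pinsker-shape $\tilde{d}_j = \tilde{\lambda}[1-(j/N)^{2\beta}]_+$ forces $\max_j a_j = O(N^{-1/2}) \to 0$, a classical CLT for weighted quadratic forms in Gaussians gives $T_n^* \rightsquigarrow N(0,1)$ under $H_0$, whence $E_{n,0}\phi_n^* \to \alpha$.

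For the type~II bound, I would substitute $U_j = n^{1/2} f_j + \xi_j$ under $P_f$ and decompose
\[
T_n^* \;=\; \mu_n(f) + W_n^G(f) + W_n^\chi,
\]
with mean $\mu_n(f) = n \sum_j \tilde{d}_j f_j^2 / \sqrt{2 \sum_j \tilde{d}_j^2}$, a Gaussian linear piece $W_n^G(f) = 2 n^{1/2} \sum_j \tilde{d}_j f_j \xi_j / \sqrt{2 \sum_j \tilde{d}_j^2}$ of variance $V_n(f) := 2n \sum_j \tilde{d}_j^2 f_j^2 / \sum_k \tilde{d}_k^2$, and a centred $\chi^2$-piece $W_n^\chi = \sum_j \tilde{d}_j(\xi_j^2 - 1) / \sqrt{2 \sum_j \tilde{d}_j^2}$ of variance $1$. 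The type~II error then equals $P_f(W_n^G + W_n^\chi \leq z_\alpha - \mu_n(f))$, which I would bound by combining a lower bound on $\mu_n(f)$ with a moderate-deviation upper bound on the noise.

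Two uniform estimates over $f \in \Sigma(\beta, M) \cap B_{\rho_n}$ then close the argument. First, since $\tilde{d}_j$ are precisely the Pinsker-type extremal weights from Section~\ref{sec:Bayes-minimax} computed at the oracle size $M_0(f) \leq M + \gamma_n$, and the truncation level $\tilde{N}$ in \eqref{tildeN} is large enough that $\sum_{j > \tilde{N}} f_j^2 \leq M \tilde{N}^{-2\beta} = o(\rho_n)$, the analog of Lemma~\ref{lem-L0-value} delivers
\[
\mu_n(f) \;\geq\; \sqrt{\tfrac{1}{2}\, A_0(\beta)\, M^{-1/(2\beta)}\, c_n^{2 + 1/(2\beta)}}\,(1+o(1))
\]
uniformly. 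Second, the explicit formulas give $V_n(f) \leq 2n \|\tilde{d}\|_\infty^2 \rho_n / \sum_k \tilde{d}_k^2 = O(c_n^{1+1/(2\beta)}\, n^{-1/(4\beta+1)}) = o(1)$ (using $c_n = o(n^K)$ for every $K>0$) and $\max_j \tilde{d}_j / \sqrt{\sum_k \tilde{d}_k^2} = O(N^{-1/2})$. A Chernoff bound on the joint MGF of $W_n^G + W_n^\chi$, Taylor-expanded to third order in the exponent, then yields
\[
\log P_f\!\bigl(W_n^G + W_n^\chi \leq -x\bigr) \;\leq\; -\tfrac{x^2}{2}\,(1+o(1))
\]
for any $x = x_n \to \infty$ with $x\, N^{-1/2} \to 0$, a regime which covers $x = \mu_n(f) - z_\alpha \sim c_n^{1+1/(4\beta)}$ thanks to the sub-polynomial growth of $c_n$. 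Combining the two estimates and dividing by $c_n^{2+1/(2\beta)}$ gives the claim.

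The hard part will be reconciling these two uniform estimates in a single choice of $\tilde{N}$: the truncation must be large enough that the oracle saddle value sees essentially the full Ermakov constant with parameter $M$ (rather than a substantially truncated version), yet small enough that both $V_n(f) \to 0$ and the Cramér correction $x\, N^{-1/2} \to 0$. The double inequality \eqref{tildeN}, together with the assumption that $c_n$ grows slower than any power of $n$, is exactly what makes both requirements compatible.
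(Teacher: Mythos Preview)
Your overall architecture matches the paper's: verify the CLT under $H_0$ using $M_0(0)=\gamma_n$ and the infinitesimality $\max_j a_j=O(N^{-1/2})$; then under the alternative, lower-bound the shift $\mu_n(f)$ by the Ermakov saddle value $\sqrt{\tfrac12 A_0(\beta)M^{-1/(2\beta)}c_n^{2+1/(2\beta)}}$ via exactly the computation in \eqref{dj}--\eqref{fjj}, and convert this into a tail bound. In fact the paper is less explicit than you are on the moderate-deviation step: it simply asserts that the worst type~II error is $(1+o(1))\Phi(z_\alpha-L_n)$ and then reads off the log-asymptotics from the Mills ratio, whereas you propose an honest Chernoff/Cram\'er argument.

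There is, however, a genuine slip in your second uniform estimate. You write
\[
V_n(f)\;\le\;\frac{2n\,\|\tilde d\|_\infty^2\,\rho_n}{\sum_k\tilde d_k^2},
\]
but on $B_{\rho_n}$ one has $\|f\|_2^2\ge\rho_n$, not $\le\rho_n$; the inequality $\sum_j\tilde d_j^2 f_j^2\le\|\tilde d\|_\infty^2\|f\|_2^2$ therefore does not give a $\rho_n$ on the right. Indeed $V_n(f)=o(1)$ is \emph{false} uniformly over $\Sigma(\beta,M)\cap B_{\rho_n}$: for $f=(\sqrt M,0,0,\dots)$ one computes $V_n(f)\asymp n\,/N(M_0(f))\to\infty$.

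Two clean repairs are available. First, you can keep the bound $\tilde d_j\le\tilde\lambda$ to get
\[
V_n(f)\;\le\;\frac{2n\,\tilde\lambda\sum_j\tilde d_j f_j^2}{\sum_k\tilde d_k^2}
\;=\;\frac{2\sqrt2\,\tilde\lambda}{\sqrt{\sum_k\tilde d_k^2}}\,\mu_n(f)
\;=\;O\!\bigl(N^{-1/2}\bigr)\,\mu_n(f),
\]
and then split into the cases $\mu_n(f)\le 2L_n$ (where $V_n(f)=O(L_n/\sqrt N)=o(1)$ by the sub-polynomial growth of $c_n$, so your Cram\'er expansion goes through unchanged) and $\mu_n(f)>2L_n$ (where a crude Chernoff bound with variance $1+V_n(f)\le 1+C\mu_n(f)/\sqrt N$ already gives an exponent $\gtrsim \mu_n(f)\sqrt N\gg L_n^2$). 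Second, you can invoke the stochastic monotonicity of noncentral $\chi^2$ in its noncentrality parameter to freeze the oracle weights at $\tilde d_j(M_0(f))$ and scale $f$ down to the shell $\|f\|_2^2=\rho_n$; on that shell your displayed bound on $V_n(f)$ is correct, and the lower bound on $\mu_n$ survives because the key inequality \eqref{fjj} uses only $M_0(f)$, which does not decrease under such scaling with the weights held fixed. Either route closes the gap with no change to the rest of your outline.
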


Define
\[
\hat{M}=\sum_{j=1}^{\tilde{N}}(Y_{j}^{2}-1/n)j^{2\beta}+\gamma_{n}%
\]
and introduce the statistic
\[
T_{n}=\frac{n^{2}\sum\tilde{d}_{j}(\hat{M})Y_{j}^{2}-n\sum\tilde{d}_{j}%
(\hat{M})}{\sqrt{2n^{2}\sum\tilde{d}_{j}^{2}(\hat{M})}}%
\]
and also the test
\[
\phi_{n}=\mathbf{1}\{T_{n}>z_{\alpha}\}.
\]
For $\hat{M}$, we have the following lemma, which is proved later.

\begin{lemma}
Under the assumptions of Theorem \ref{sharpadaptation}, we have
\[
\frac{\hat{M}}{M_{0}(f)}-1=o_{p}(1),
\]
uniformly for $f\in\Sigma(\beta,M)\cap B_{\rho}$. \label{lem:Mconsist}
\end{lemma}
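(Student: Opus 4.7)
The plan is to apply Chebyshev's inequality to $\hat M/M_0(f)-1$ after computing the first two moments of $\hat M$ exactly, using the Sobolev constraint $\sum_j j^{2\beta}f_j^2\leq M$ to bound the variance, and the lower bound $M_0(f)\geq\gamma_n$ together with the two-sided constraint (\ref{tildeN}) on $\tilde N$ to control the relative error uniformly over $f\in\Sigma(\beta,M)\cap B_{\rho}$.

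First, since $Y_j=f_j+n^{-1/2}\xi_j$ with $\xi_j\sim N(0,1)$ i.i.d., one computes directly $E_{n,f}(Y_j^2-1/n)=f_j^2$, so $E_{n,f}\hat M=M_0(f)$. Writing $Y_j^2-1/n-f_j^2=2n^{-1/2}f_j\xi_j+n^{-1}(\xi_j^2-1)$ and using that $\mathrm{Cov}(\xi_j,\xi_j^2)=E\xi_j^3=0$, independence across $j$ yields
\[
\mathrm{Var}_{n,f}(\hat M)=4n^{-1}\sum_{j=1}^{\tilde N}j^{4\beta}f_j^2+2n^{-2}\sum_{j=1}^{\tilde N}j^{4\beta}.
\]

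Next, I bound each variance term uniformly. For the deterministic term, $\sum_{j=1}^{\tilde N}j^{4\beta}\asymp \tilde N^{4\beta+1}/(4\beta+1)$. For the stochastic term, writing $j^{4\beta}\leq \tilde N^{2\beta}\cdot j^{2\beta}$ and invoking the ellipsoid constraint gives $\sum_{j=1}^{\tilde N}j^{4\beta}f_j^2\leq \tilde N^{2\beta}M$. Combining with $M_0(f)\geq\gamma_n$ therefore
\[
\frac{\mathrm{Var}_{n,f}(\hat M)}{M_0(f)^2}\leq \frac{4Mn^{-1}\tilde N^{2\beta}}{\gamma_n^2}+\frac{Cn^{-2}\tilde N^{4\beta+1}}{\gamma_n^2}
\]
for some absolute constant $C$ (uniformly in $f$).

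Finally, I substitute the bounds from (\ref{tildeN}). The condition $\tilde N\ll\gamma_n^{1/2\beta}n^{2/(4\beta+1)}$ gives $n^{-2}\tilde N^{4\beta+1}\ll\gamma_n^{(4\beta+1)/2\beta}$, so the second ratio is $\ll\gamma_n^{1/(2\beta)}\to 0$. For the first ratio, the same upper bound on $\tilde N$ gives $n^{-1}\tilde N^{2\beta}\ll \gamma_n n^{-1/(4\beta+1)}$, so the ratio is $\ll n^{-1/(4\beta+1)}/\gamma_n$; with the specific choice $\gamma_n=c_n^{-1/2}$ this becomes $c_n^{1/2}n^{-1/(4\beta+1)}$, which tends to zero by the hypothesis $c_n=o(n^K)$ for every $K>0$ applied to $K=2/(4\beta+1)$. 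Chebyshev's inequality then gives $P_{n,f}(|\hat M/M_0(f)-1|>\varepsilon)\to 0$ uniformly over the alternative, which is the claim.

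The main technical obstacle is that the variance bound must stay small \emph{relative} to $M_0(f)^2$, and $M_0(f)$ can be as small as $\gamma_n$; the whole reason for adding the deterministic cushion $\gamma_n$ to $\hat M$ and $M_0(f)$, and for the upper bound on $\tilde N$ in (\ref{tildeN}), is precisely to make this uniform normalization work, so the argument reduces to verifying the arithmetic of the rate constraints.
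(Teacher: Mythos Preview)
Your proof is correct and follows essentially the same route as the paper's own argument: compute $E_{n,f}\hat M=M_0(f)$ and the exact variance, bound the $f$-dependent term by $\tilde N^{2\beta}M$ via the ellipsoid constraint, use $M_0(f)\geq\gamma_n$, and then invoke the upper bound in (\ref{tildeN}) together with $c_n=o(n^K)$ to drive $\mathrm{Var}_{n,f}(\hat M)/\gamma_n^2\to 0$ uniformly, concluding with Chebyshev. Your write-up is in fact a bit more explicit than the paper's about why the first variance ratio tends to zero (the paper simply cites ``the first inequality of (\ref{tildeN})'' for both terms), but the substance is the same.
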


Now rewrite
\[
T_{n}=\sum_{j}\frac{\tilde{d}_{j}(\hat{M})}{\sqrt{\sum\tilde{d}_{j}^{2}%
(\hat{M})}}\cdot\frac{Y_{j}^{2}-1/n}{\sqrt{2n^{-2}}},
\]
where $\tilde{d}_{j}(\hat{M})=\tilde{\lambda}(1-(j/N(\hat{M}))^{2\beta})_{+}$.
Since $\tilde{\lambda}$ in the last display can be canceled, for simplicity we
write $\tilde{d}_{j}(\hat{M})=(1-(j/N(\hat{M}))^{2\beta})_{+}$ from now on in
this section. First, since $N(\hat{M})\geq N(\gamma_{n})$, we have
\begin{align*}
\sum\tilde{d}_{j}^{2}(\hat{M})  &  =\sum\left(  1-\left(  \frac{j}{N(\hat{M}%
)}\right)  ^{2\beta}\right)  _{+}^{2}\\
&  \sim N(\hat{M})\int_{0}^{1}(1-t^{2\beta})_{+}^{2}\,dt\\
&  =N(\hat{M})K(\beta).
\end{align*}
\begin{privatenotes}
{\bf Private Notes. }
Need $N(\hat{M})\rightarrow\infty$.
\end{privatenotes}Therefore,
\[
T_{n}=(1+o(1))\sum\frac{\tilde{d}_{j}(\hat{M})}{\sqrt{N(\hat{M})K(\beta)}%
}\cdot\frac{Y_{j}^{2}-1/n}{\sqrt{2n^{-2}}}.
\]
By Lemma \ref{lem:Mconsist},
\[
T_{n}=(1+o(1))\sum_{j}\frac{\tilde{d}_{j}(\hat{M})}{\sqrt{N(M_{0}(f))K(\beta
)}}\cdot\frac{Y_{j}^{2}-1/n}{\sqrt{2n^{-2}}}.
\]
At this point, make $\hat{M}$ independent of $Y_{j}^{2}$ by sample splitting.
Set $n=\tau n+(1-\tau)n$, where $\tau$ is close to $1$ but fixed, and
$n_{1}=\tau n,n_{2}=(1-\tau)n$. Assume two sets of observations
\begin{align}
&  Y_{1j}=f_{j}+n_{1}^{-1/2}\xi_{1j},j=1,2,\ldots\\
&  Y_{2j}=f_{j}+n_{2}^{-1/2}\xi_{2j},j=1,2,\ldots
\end{align}
Use $\{Y_{2j}\}$ to obtain $\hat{M}$, and now replace $T_{n}$ by
\[
T_{n}^{s}=(1+o(1))\sum_{j}\frac{\tilde{d}(\hat{M})}{\sqrt{N(M_{0}(f))K(\beta
)}}\cdot\frac{Y_{1j}^{2}-n^{-1}}{\sqrt{2}n^{-1}}.
\]
Denote the difference of coefficients by $\Delta_{j}=\tilde{d}_{j}(\hat
{M})-\tilde{d}_{j}(M_{0}(f))$. Note the largest difference is obtained at
$j\approx\min\{N(\hat{M}),N(M_{0}(f))\}$. Then
\[
|\Delta_{j}|\leq\frac{|\hat{M}-M_{0}(f)|}{\gamma_{n}}%
\]
uniformly for all $j$. Note in $T_{1}$ there are at most $C_{2}c_{n}%
^{-1/(2\beta)}n^{2/(4\beta+1)}$ nonzero coefficients. Then
\[
T_{n}^{s}=(1+o(1))\sum_{j=1}^{C_{2}c_{n}^{-1/(2\beta)}n^{2/(4\beta+1)}}%
\frac{\tilde{d}_{j}(M_{0}(f))}{\sqrt{N(M_{0}(f))K(\beta)}}\eta_{j}+r_{n}%
\]
where $\eta_{j}=\frac{Y_{1j}^{2}-n_{1}^{-1}}{\sqrt{2}n_{1}^{-1}}$, and
\[
r_{n}=\sum_{j=1}^{C_{2}c_{n}^{-1/(2\beta)}n^{2/(4\beta+1)}}\frac{\Delta
_{j}\eta_{j}}{\sqrt{N(M_{0}(f))K(\beta)}}.
\]
Under $H_{0}$, the r.v.%
\'{}%
s $\eta_{j}$ are independent of $\hat{M}$ and $E\eta_{j}=0$, $\text{Var}%
(\eta_{j})=1$. Thus $\mathrm{Var}(r_{n})=Er_{n}^{2}=EE(r_{n}^{2}|\{Y_{2j}\})$
and
\[
E(r_{n}^{2}|\{Y_{2j}\})=E\sum_{j=1}^{C_{2}c_{n}^{-1/(2\beta)}n^{2/(4\beta+1)}%
}\frac{\Delta_{j}^{2}}{N(M_{0}(f))K(\beta)}\leq\frac{|\hat{M}-M_{0}(f)|^{2}%
}{\gamma_{n}^{2+1/(2\beta)}}.
\]
Therefore, by the result for $\mathrm{Var}(\hat{M})$ in the proof of Lemma
\ref{lem:Mconsist},
\[
\mathrm{Var}(r_{n})\leq\frac{E|\hat{M}-M_{0}(f)|^{2}}{\gamma_{n}%
^{2+1/(2\beta)}}=\frac{\mathrm{Var}(\hat{M})}{\gamma_{n}^{2+1/(2\beta)}}%
\leq\frac{2K(\beta)\tilde{N}^{4\beta+1}}{n^{2}\gamma_{n}^{2+1/(2\beta)}}%
+\frac{4\tilde{N}^{2\beta}M}{n\gamma_{n}^{2+1/(2\beta)}},
\]
where the last two terms converge to $0$ by the first inequality in
(\ref{tildeN}). Hence, under $H_{0}$, the r.v.%
\'{}%
s $T_{n}$ and $T_{n}^{s}$ converge to $N(0,1)$ in law.

Next, we consider $T_{n}$ or $T_{n}^{s}$ under the alternative. The worst case
type II error is determined by the following quantity
\[
L_{n}=\frac{n}{\sqrt{2}}\inf_{f\in\Sigma(\beta,M)\cap B_{\rho}}\frac
{\sum_{j=1}^{\tilde{N}}f_{j}^{2}\tilde{d}_{j}(\hat{M})}{\left(  \sum\tilde
{d}_{j}(\hat{M})\right)  ^{1/2}}.
\]
First, since $N(\hat{M})\geq\left(  \frac{\gamma_{n}}{c_{n}}\right)
^{1/(2\beta)}\cdot n^{2/(4\beta+1)}\rightarrow\infty$,
\begin{align}
\tilde{d}_{j}^{2}  &  =\sum_{j=1}^{\tilde{N}}\left(  1-(j/N)^{2\beta}\right)
_{+}^{2}\nonumber\\
&  =(1+o(1))N\int_{0}^{1}(1-t^{2\beta})^{2}dt\nonumber\\
&  =(1+o(1))N\cdot\frac{8\beta^{2}}{(2\beta+1)(4\beta+1)}. \label{dj2}%
\end{align}
Second, consider
\[
\sum_{j=1}^{\tilde{N}}f_{j}^{2}\tilde{d}_{j}(\hat{M})=\sum_{j=1}^{\tilde{N}%
}f_{j}^{2}(1-(j/N)^{2\beta})_{+}.
\]
Note
\begin{align}
\sum_{j=1}^{\tilde{N}}f_{j}^{2}  &  =\sum_{j=1}^{\infty}f_{j}^{2}%
-\sum_{j=\tilde{N}+1}^{\infty}f_{j}^{2}\nonumber\\
&  \geq\rho-\tilde{N}^{-2\beta}M\nonumber\\
&  =\rho\left(  1-\frac{M}{\rho\tilde{N}^{2\beta}}\right) \nonumber\\
&  =\rho(1+o(1)), \label{fj2}%
\end{align}
where the last step is refers to the second inequality of (\ref{tildeN}). On
the other hand, since $\tilde{N}\gg N$ and $N(\hat{M})=[(4\beta+1)\hat{M}%
\rho^{-1}]^{1/(2\beta)}$,
\begin{align}
\sum_{j=1}^{N}f_{j}^{2}(j/N)^{2\beta}+\sum_{j=N+1}^{\tilde{N}}f_{j}^{2}  &
\leq\sum_{j=1}^{\tilde{N}}f_{j}^{2}(j/N)^{2\beta}\nonumber\\
&  \leq N^{-2\beta}M_{0}(f)\nonumber\\
&  =\rho(1+4\beta)^{-1}. \label{fjj2}%
\end{align}
Combining (\ref{fjdj})-(\ref{fjj}) gives
\[
\sum_{j=1}^{\tilde{N}}f_{j}^{2}\tilde{d}_{j}\geq(1+o(1))\tilde{\lambda}%
\rho\cdot\frac{4\beta}{4\beta+1}.
\]
Combining this with (\ref{dj}) gives
\begin{align*}
\frac{n\sum_{j=1}^{\tilde{N}}f_{j}^{2}\tilde{d}_{j}}{(2\sum\tilde{d}_{j}%
^{2})^{1/2}}  &  \geq(1+o(1))\frac{n}{\sqrt{2}}\sqrt{\frac{2(2\beta+1)}%
{4\beta+1}\rho^{2}/N}\\
&  \geq(1+o(1))\sqrt{\frac{(2\beta+1)c_{n}^{2+1/(2\beta)}}{(4\beta
+1)^{1+1/(2\beta)}(M+\gamma_{n})^{1/(2\beta)}}}\\
&  \geq(1+o(1))\sqrt{\frac{1}{2}A_{0}(\beta)c_{n}^{2+1/(2\beta)}%
M^{-1/(2\beta)}}%
\end{align*}
Theorem \ref{sharpadaptation} is proved.

\begin{proof}
[Proof of Lemma \ref{lem:oracle}]Rewrite
\[
T_{n}^{\ast}=\sum_{j}\frac{\tilde{d}_{j}(M_{0}(f))}{\sqrt{\sum\tilde{d}%
_{j}^{2}(M_{0}(f))}}\cdot\frac{Y_{j}^{2}-1/n}{\sqrt{2n^{-2}}}.
\]
Under $H_{0}$, we have $f=0$, and $M_{0}(f)=\gamma_{n}$. Since
\[
\sum[1-(j/N)^{2\beta}]_{+}^{2}\sim N\cdot\int_{0}^{1}(1-t^{2\beta}%
)^{2}\,dt=K(\beta)\cdot(\gamma_{n}/c_{n})^{1/2\beta}n^{2/(4\beta+1)},
\]
then
\[
\left\vert \frac{\tilde{d}_{j}(M_{0}(f))}{\sqrt{\sum\tilde{d}_{j}^{2}%
(M_{0}(f))}}\right\vert \leq\frac{1}{\sqrt{K(\beta)\cdot(\gamma_{n}%
/c_{n})^{1/2\beta}n^{2/(4\beta+1)}}}=o(1),
\]
uniformly for all $j$. It can be shown
that $T_{n}^{\ast}$ converges to $N(0,1)$ in law.

By similar arguments, the worst type II error is $(1+o(1))\Phi(z-L_{n})$
where
\[
L_{n}=\inf_{f\in\Sigma(\beta,M)\cap B_{\rho}}\frac{n\sum f_{j}^{2}\tilde
{d}_{j}}{(2\sum\tilde{d}_{j}^{2})^{1/2}}.
\]
Note $\tilde{d}_{j}=\tilde{d}_{j}(M_{0}(f))$ depending on $f$. By the second
inequality of (\ref{tildeN}), we have $\tilde{N}\gg N(M_{0}(f))$ and
$\tilde{d}_{j}=0,$ for $j\geq\tilde{N}$,
\[
L_{n}=\frac{n}{\sqrt{2}}\inf_{f\in\Sigma(M)\cap B_{\rho}}\frac{\sum
_{j=1}^{\tilde{N}}f_{j}^{2}\tilde{d}_{j}}{(\sum\tilde{d}_{j}^{2})^{1/2}}.
\]
First, since $N(M_{0}(f))\geq\left(  \frac{\gamma_{n}}{c_{n}%
}\right)  ^{1/(2\beta)}\cdot n^{2/(4\beta+1)}\rightarrow\infty$ uniformly for
$f\in\Sigma(\beta,M)\cap B_{\rho}$,
\begin{align}
\tilde{d}_{j}^{2}  &  =\tilde{\lambda}^{2}\sum_{j=1}^{\tilde{N}}\left(
1-(j/N)^{2\beta}\right)  _{+}^{2}\nonumber\\
&  =(1+o(1))\tilde{\lambda}^{2}N\int_{0}^{1}(1-t^{2\beta})^{2}dt\nonumber\\
&  =(1+o(1))\tilde{\lambda}^{2}N\cdot\frac{8\beta^{2}}{(2\beta+1)(4\beta+1)},
\label{dj}%
\end{align}
uniformly for $f\in\Sigma(\beta,M)\cap B_{\rho}$. Second, consider
\begin{equation}
\sum_{j=1}^{\tilde{N}}f_{j}^{2}\tilde{d}_{j}=\tilde{\lambda}\sum_{j=1}%
^{\tilde{N}}f_{j}^{2}(1-(j/N)^{2\beta})_{+}=\tilde{\lambda}\left[  \sum
_{j}^{\tilde{N}}f_{j}^{2}-\left(  \sum_{j}^{N}f_{j}^{2}(j/N)^{2\beta}%
+\sum_{j=N+1}^{\tilde{N}}f_{j}^{2}\right)  \right]  . \label{fjdj}%
\end{equation}
Note
\begin{align}
\sum_{j=1}^{\tilde{N}}f_{j}^{2}  &  =\sum_{j=1}^{\infty}f_{j}^{2}%
-\sum_{j=\tilde{N}+1}^{\infty}f_{j}^{2}\nonumber\\
&  \geq\rho-\tilde{N}^{-2\beta}M\nonumber\\
&  =\rho\left(  1-\frac{M}{\rho\tilde{N}^{2\beta}}\right) \nonumber\\
&  =\rho(1+o(1)), \label{fj}%
\end{align}
where the last step is due to the second inequality of (\ref{tildeN}). On the
other hand, since $\tilde{N}\gg N$ and $N=[\rho^{-1}(4\beta+1)M_{0}%
(f)]^{1/(2\beta)}$,
\begin{align}
\sum_{j=1}^{N}f_{j}^{2}(j/N)^{2\beta}+\sum_{j=N+1}^{\tilde{N}}f_{j}^{2}  &
\leq\sum_{j=1}^{\tilde{N}}f_{j}^{2}(j/N)^{2\beta}\nonumber\\
&  \leq N^{-2\beta}M_{0}(f)\nonumber\\
&  =\rho(1+4\beta)^{-1} \label{fjj}%
\end{align}
Combining (\ref{fjdj})-(\ref{fjj}) gives
\[
\sum_{j=1}^{\tilde{N}}f_{j}^{2}\tilde{d}_{j}\geq(1+o(1))\tilde{\lambda}%
\rho\cdot\frac{4\beta}{4\beta+1}%
\]
uniformly for $f\in\Sigma(\beta,M)\cap B_{\rho}$. Combining this with
(\ref{dj}) gives
\begin{align*}
\frac{n\sum_{j=1}^{\tilde{N}}f_{j}^{2}\tilde{d}_{j}}{(2\sum\tilde{d}_{j}%
^{2})^{1/2}}  &  \geq(1+o(1))\frac{n}{\sqrt{2}}\sqrt{\frac{2(2\beta+1)}%
{4\beta+1}\rho^{2}/N}\\
&  \geq(1+o(1))\sqrt{\frac{(2\beta+1)c_{n}^{2+1/(2\beta)}}{(4\beta
+1)^{1+1/(2\beta)}(M+\gamma_{n})^{1/(2\beta)}}}\\
&  \geq(1+o(1))\sqrt{\frac{(2\beta+1)c_{n}^{2+1/(2\beta)}}{(4\beta
+1)^{1+1/(2\beta)}M^{1/(2\beta)}}},
\end{align*}
uniformly for $f\in\Sigma(\beta,M)\cap B_{\rho}$. Therefore,
\[
L_{n}\geq(1+o(1))\sqrt{\frac{1}{2}A_{0}(\beta)c_{n}^{2+1/(2\beta
)}M^{-1/(2\beta)}},
\]
and the result follows. 

\end{proof}

\begin{proof}
[Proof of Lemma \ref{lem:Mconsist}]Since
\begin{align*}
\mathrm{Var}(\hat{M})  &  =\sum_{j=1}^{\tilde{N}}\left(  \frac{2}{n^{2}}%
+\frac{4f_{j}^{2}}{n}\right)  j^{4\beta}\\
&  \leq(1+o(1))\frac{2K(\beta)\tilde{N}^{4\beta+1}}{n^{2}}+\frac{4\tilde
{N}^{2\beta}M}{n},
\end{align*}
by the first inequality of (\ref{tildeN}),
\[
\frac{\mathrm{Var}(\hat{M})}{\gamma_{n}^{2}}=o(1)
\]
uniformly for $f\in\Sigma\cap V_{\rho}$. Combining with $E\hat{M}=M_{0}(f)$
and using Chebyshev's inequality give
\[
\frac{\left\vert \hat{M}-M_{0}(f)\right\vert }{\gamma_{n}}=o_{p}(1),
\]
and then
\[
\left\vert \frac{\hat{M}}{M_{0}(f)}-1\right\vert \leq\frac{\left\vert \hat
{M}-M_{0}(f)\right\vert }{\gamma_{n}}=o_{p}(1),
\]
uniformly for $f\in\Sigma\cap V_{\rho}$.
\end{proof}


\section{Appendix\label{sec: Appendix}}

\subsection{Adaptive minimax estimation with known $\mathbf{\beta}%
$\label{sec:adaptive-estim}}

For the convenience of the reader, we sketch the modified plug-in method of
Golubev \cite{Golub-87} allowing to attain the Pinsker bound for known
smoothness $\beta$ and unknown bound $M$, in the framework of Sobolev
ellipsoids. For more comprehensive results, allowing also for unknown $\beta$,
cf. \cite{gol-90-quasi}, \cite{Tsyb-book}. Consider the estimation problem for
$f=(f_{j})_{j=1}^{\infty}$, with squared $l_{2}$-loss, in the Gaussian
sequence model
\[
Y_{j}=f_{j}+n^{-1/2}\xi_{j}%
\]
with $f\in\Sigma\left(  \beta,M\right)  $. With known $\beta$ and unknown $M$,
the aim is to find an estimator which is asymptotically minimax in the sense
of Pinsker \cite{pinsk-80}. For known $M$, the optimal filter coefficients are
$(1-\mu j^{\beta})_{+}$, where $\mu$ is determined by
\[
\frac{1}{n}\sum j^{\beta}(1-\mu j^{\beta})_{+}=\mu M.
\]
Since
\[
\mu\sim\left(  \frac{\beta\cdot n^{-1}}{M(\beta+1)(2\beta+1)}\right)
^{\beta/(2\beta+1)},
\]
the optimal truncation index (or bandwidth) is of the order $n^{1/(2\beta+1)}%
$.

Choose $n^{1/(2\beta+1/2)}\gg\tilde{N}\gg n^{1/(2\beta+1)}$ and $1\gg
\gamma_{n}\gg\tilde{N}^{2\beta+1/2}/n$, and define
\[
M_{0,f}=\sum_{j=1}^{\tilde{N}}j^{2\beta}f_{j}^{2}+\gamma_{n}.
\]
Define $N=N(M_{0,f})=\alpha\cdot n^{1/(2\beta+1)}M_{0,f}^{1/(2\beta+1)}$,
where $\alpha$ is a constant to be chosen. Define "oracle" filter
coefficients, depending on $f$, as
\[
d_{j}=d(j/N),\text{ where }d(t)=\left(  1-t^{\beta}\right)  _{+}.
\]
Consider the oracle estimator $(d_{j}Y_{j})_{1}^{\infty}$. Its risk is
\begin{align*}
&  \sum(1-d_{j})^{2}f_{j}^{2}+\frac{1}{n}\sum d_{j}^{2}\\
=  &  \sum_{j=1}^{\tilde{N}}(1-d_{j})^{2}f_{j}^{2}+\sum_{j>{\tilde{N}}%
}(1-d_{j})^{2}f_{j}^{2}+\frac{1}{n}\sum d_{j}^{2}\\
:=  &  A_{1}+A_{2}+A_{3}.
\end{align*}
To bound the terms $A_{i}$, note first%
\[
A_{1}\leq\sup_{j\leq\tilde{N}}(1-d_{j})^{2}j^{-2\beta}M_{0,f}\leq N^{-2\beta
}M_{0,f}=\alpha^{-2\beta}n^{-2\beta/(2\beta+1)}(M+\gamma_{n})^{1/(2\beta+1)}.
\]
\begin{privatenotes}
{\bf Private Notes. } Note in the preceding display the term $(1 - d_{j})^{2} j^{-2\beta}$ is equal
to $N^{-2\beta}$ for $j\leq N$ and $j^{-2\beta}$ for $N < j \leq\tilde{N}$.
Therefore, the sup is $N^{-2\beta}$.
\end{privatenotes}Second, $A_{2}\leq\sum_{j>\tilde{N}}f_{j}^{2}\leq\tilde
{N}^{-2\beta}M=o(n^{-2\beta/(2\beta+1)})$. Furthermore,
\begin{align*}
A_{3}  &  =\frac{N}{n}\frac{1}{N}\sum(1-(j/N)^{\beta})_{+}^{2}\\
&  =\alpha n^{-2\beta/(2\beta+1)}M_{0,f}^{1/(2\beta+1)}\int_{0}^{\infty
}(1-t^{\beta})_{+}^{2}dt\;\;\;\left(  1+o(1)\right)  \text{ uniformly over
}f\in\Sigma\left(  \beta,M\right) \\
&  \leq\alpha n^{-2\beta/(2\beta+1)}M^{1/(2\beta+1)}\cdot\frac{2\beta^{2}%
}{(\beta+1)(2\beta+1)}\;\;\;\left(  1+o(1)\right)  .
\end{align*}
Combine these and choose $\alpha=\left(  \frac{(\beta+1)(2\beta+1)}{\beta
}\right)  ^{1/(2\beta+1)}$, and we find that the supremal risk, over
$f\in\Sigma\left(  \beta,M\right)  $, of the oracle estimator is at most
\begin{equation}
c(\beta)\cdot n^{-2\beta/(2\beta+1)}M^{1/(2\beta+1)}\;\;\;\left(
1+o(1)\right)  , \label{risk-oracle-estim}%
\end{equation}
where
\[
c(\beta)=\left(  \frac{\beta}{\beta+1}\right)  ^{2\beta/(2\beta+1)}%
\cdot(1+2\beta)^{1/(2\beta+1)}%
\]
is the Pinsker constant.

The next step is to show that the risk (\ref{risk-oracle-estim}) is also
attained when the unknown $M_{0,f}$ is replaced by an unbiased estimator. The
latter is $\hat{M}_{n}=\sum_{j=1}^{\tilde{N}_{n}}j^{2\beta}\hat{f}_{j}%
^{2}+\gamma_{n}$, where $\hat{f}_{j}^{2}=y_{j}^{2}-n^{-1}$. Then
\[
E(\hat{M})=\sum_{j=1}^{\tilde{N}_{n}}j^{2\beta}f_{j}^{2}+\gamma_{n}%
=M_{0,f}\leq M+\gamma_{n}%
\]
and
\begin{align*}
\text{Var}(\hat{M})  &  =\sum_{j=1}^{\tilde{N}}j^{4\beta}\text{Var}(Y_{j}%
^{2})\\
&  =\sum_{j=1}^{\tilde{N}}j^{4\beta}n^{-2}(2+4nf_{j}^{2})\\
&  =2n^{-2}\sum_{j=1}^{\tilde{N}}j^{4\beta}+4n^{-1}\sum_{j=1}^{\tilde{N}%
}j^{4\beta}f_{j}^{2}\\
&  =J_{1}+J_{2},
\end{align*}
where the first term
\[
J_{1}=2n^{-2}\tilde{N}^{4\beta+1}\cdot\frac{1}{\tilde{N}}\sum_{j=1}^{\tilde
{N}}\left(  j/\tilde{N}\right)  ^{4\beta}\sim2n^{-2}\tilde{N}^{4\beta+1}%
\cdot\int_{0}^{1}x^{4\beta}dx=o(1)
\]
since $\tilde{N}=o(n^{1/(2\beta+1/2)})$, and the second term
\[
J_{2}\leq4n^{-1}\tilde{N}^{2\beta}\sum_{j=1}^{\tilde{N}}j^{2\beta}f_{j}%
^{2}\leq4n^{-1}\tilde{N}^{2\beta}M=4M\;n^{-2}\tilde{N}^{4\beta+1}\frac
{n}{\tilde{N}^{2\beta+1}}=o(J_{1})
\]
uniformly for $f\in\Sigma(\beta,M)$ since $\tilde{N}\gg n^{1/(2\beta+1)}$.
Combining these gives $\mathrm{Var}(\hat{M})=o(1)$ uniformly for $f\in
\Sigma(\beta,M)$. Recalling $\gamma_{n}\gg\tilde{N}^{2\beta+1/2}/n$ gives
\[
\text{Var}\left(  \frac{\hat{M}-M_{0,f}}{\gamma_{n}}\right)  \sim
\frac{2Kn^{-2}\tilde{N}^{4\beta+1}}{\gamma_{n}^{2}}=o(1),
\]
and then
\[
\left\vert \frac{\hat{M}}{M_{0,f}}-1\right\vert \leq\left\vert \frac{\hat
{M}-M_{0,f}}{\gamma_{n}}\right\vert =o_{p}(1)
\]
uniformly.

Finally, it can be shown that the difference between the oracle estimator
$\left(  d_{j}Y_{j}\right)  _{1}^{\infty}$ and the estimator $\left(
d(j/N(\hat{M}))Y_{j}\right)  _{1}^{\infty}$ is negligible, i.e.
\[
E\sum_{j=1}^{\infty}\left(  d(j/N(M_{0,f}))-d(j/N(\hat{M}))\right)  ^{2}%
Y_{j}^{2}=o(n^{-2\beta/(2\beta+1)}).
\]

\begin{privatenotes}
{\bf Private Notes.} Optimal estimate: $\hat f_{j}^{2}= (1-\mu_{n} j^{\beta})_{+}$,
$\mu_{n}=\mu_{n}(M), \tilde\mu_{n} = \mu_{n}(\hat M)$,
\[
E\left( \hat M^{1/(2\beta+1)} - (E\hat M_{n} )^{1/(2\beta+1)} \right) ^{4} = ?
\]
\end{privatenotes}





\subsection{Proofs for Section \ref{sec:Bayes-minimax}%
\label{sec: proofs-Bayes-minimax}}

\begin{proof}
[Proof of Lemma \ref{lem-quad-tests-original}](a) Under the null hypothesis we
have $Y_{j}^{2}=n^{-1}\xi_{j}^{2}$, hence $T=\sum d_{j}\left(  \xi_{j}%
^{2}-1\right)  /\sqrt{2}$. Then it follows from (\ref{shrinking_coef}) and
$n\rho\rightarrow\infty$ that the CLT\ infinitesimality condition
\[
\sup_{j}d_{j}^{2}=o(1)
\]
holds uniformly over $d\in\mathcal{D}$, proving the assertion.

(b) Since $Y_{j}^{2}=f_{j}^{2}+2n^{-1/2}f_{j}\xi_{j}+n^{-1}\xi_{j}^{2}$, we
have
\begin{align}
T  &  =\frac{1}{\sqrt{2}}\sum d_{j}\left(  nf_{j}^{2}+2n^{1/2}f_{j}\xi
_{j}+\left(  \xi_{j}^{2}-1\right)  \right)  ,\label{in-view-of}\\
T-L(d,f)  &  =\frac{1}{\sqrt{2}}\sum d_{j}\left(  2n^{1/2}f_{j}\xi_{j}+\left(
\xi_{j}^{2}-1\right)  \right)  . \label{clt-infinitesi}%
\end{align}
An easy calculation gives
\[
\mathrm{Var}_{f}T=\frac{1}{2}\sum d_{j}^{2}\left(  4nf_{j}^{2}+2\right)
=1+2n\sum d_{j}^{2}f_{j}^{2}%
\]
where in view of (\ref{shrinking_coef}) we have for $f\in B_{\rho}^{\prime}$
\[
n\sum d_{j}^{2}f_{j}^{2}\leq\delta\rho^{-1}\sum f_{j}^{2}\leq2\delta=o(1).
\]
Consequently, $\mathrm{Var}_{f}T\rightarrow1$ uniformly. Now the CLT
infinitesimality condition on the sum (\ref{clt-infinitesi}) amounts to
\begin{equation}
\sup_{j}d_{j}^{2}\left(  nf_{j}^{2}+1\right)  =o(1). \label{clt-infinitesi-2}%
\end{equation}
For $f\in B_{\rho}^{\prime}$ we have $f_{j}^{2}\leq2\rho$, hence in view of
(\ref{shrinking_coef})
\[
d_{j}^{2}\left(  nf_{j}^{2}+1\right)  \leq d_{j}^{2}\left(  2n\rho+1\right)
\leq2\delta
\]
for $n$ sufficiently large. Hence (\ref{clt-infinitesi-2}) is fulfilled
uniformly over $d\in\mathcal{D}$ and $f\in B_{\rho}^{\prime}$, and the claim follows.

(c) Set $f_{j}\sim N(0,\sigma_{j}^{2})$; then in view of (\ref{in-view-of})%
\begin{equation}
T-L(d,\sigma)=\frac{1}{\sqrt{2}}\sum d_{j}\left(  2n^{1/2}f_{j}\xi_{j}+\left(
\xi_{j}^{2}-1\right)  \right)  +\frac{n}{\sqrt{2}}\sum d_{j}\left(  f_{j}%
^{2}-\sigma_{j}^{2}\right)  . \label{clt-infinitesi-3*}%
\end{equation}
An easy calculation gives
\begin{align*}
\mathrm{Var}_{f}T  &  =\frac{1}{2}\sum d_{j}^{2}\left(  4n\sigma_{j}%
^{2}+2\right)  +n\sum d_{j}^{2}\\
&  =1+n\sum d_{j}^{2}\left(  2\sigma_{j}^{2}+\sigma_{j}^{4}\right)
\end{align*}
where in view of (\ref{shrinking_coef}) we have for $\sigma\in B_{\rho
}^{\prime}$
\begin{align*}
n\sum d_{j}^{2}\sigma_{j}^{2}  &  \leq\delta\rho^{-1}\sum\sigma_{j}^{2}%
\leq2\delta=o(1),\\
n\sum d_{j}^{2}\sigma_{j}^{4}  &  \leq2\rho n\sum d_{j}^{2}\sigma_{j}^{2}%
\leq4\rho\delta=o(1).
\end{align*}
Consequently, $\mathrm{Var}_{f}T\rightarrow1$ uniformly. Now the
infinitesimality condition on the sum (\ref{clt-infinitesi-3*}) amounts to
\begin{equation}
\sup_{j}d_{j}^{2}\left(  1+n\sigma_{j}^{2}+n\sigma_{j}^{4}\right)  =o(1).
\label{clt-infinitesi-4}%
\end{equation}
For $\sigma\in B_{\rho}^{\prime}$ we have $\sigma_{j}^{2}\leq2\rho$, hence in
view of (\ref{shrinking_coef})
\[
d_{j}^{2}\left(  1+n\sigma_{j}^{2}+n\sigma_{j}^{4}\right)  \leq d_{j}%
^{2}\left(  1+n\rho+n\rho^{2}\right)  \leq3\delta
\]
for $n$ sufficiently large. Hence (\ref{clt-infinitesi-4}) is fulfilled
uniformly over $d\in\mathcal{D}$ and $\sigma\in B_{\rho}^{\prime}$, and the
claim follows.
\end{proof}

\bigskip

\begin{proof}
[Proof of Lemma \ref{lem-saddle} ]\bigskip Let $\mathcal{\tilde{D}}$ be
defined as $\mathcal{D}$ in (\ref{shrinking_coef}) but with condition
$\left\Vert d\right\Vert ^{2}=1$ replaced by $\left\Vert d\right\Vert ^{2}%
\leq1$. Then, since $L(d,f)$ is linear in $d$, for every $\tilde{d}%
\in\mathcal{\tilde{D}}$ there is a $d\in\mathcal{D}$ such that $L(\tilde
{d},f^{2})\leq L(d,f^{2})$ for every $f$. Hence it suffices to prove the claim
for $\mathcal{D}$ replaced by the compact convex set $\mathcal{\tilde{D}}$.
The restriction $f\in\Sigma(\beta,M)\cap B_{\rho}^{\prime}$ is equivalent to
$f^{2}$ being in the set
\begin{equation}
\left\{  g\in\mathbb{R}_{+}^{n}:\sum g_{j}j^{2\beta}\leq M,\rho\leq\sum
g_{j}\leq2\rho\right\}  \label{restric-in- terms-of-f2}%
\end{equation}
which is convex and compact (and nonempty for large enough $n$ since
$\rho\rightarrow0$). The functional $L$ is bilinear in $d$ and $f^{2}$; the
standard minimax theorem now furnishes the result.
\end{proof}

\bigskip

\begin{lemma}
\label{lem-descrip-saddlepoint}For $n$ large enough, the saddlepoint
$d_{0},f_{0}$ of Lemma \ref{lem-saddle} is given by
\[
d_{0}=\frac{f_{0}^{2}}{\left\Vert f_{0}^{2}\right\Vert },\;\;f_{0,j}%
^{2}=\left(  \lambda-\mu j^{2\beta}\right)  _{+}\text{, }j=1,\ldots,n
\]
where $\lambda,\mu$ are the unique positive solutions of the equations
\begin{equation}
\sum_{j=1}^{n}j^{2\beta}\left(  \lambda-\mu j^{2\beta}\right)  _{+}=M\text{,
}\sum_{j=1}^{n}\left(  \lambda-\mu j^{2\beta}\right)  _{+}=\rho.
\label{saddlepoint-shape}%
\end{equation}
The value of $L$ at the saddlepoint is
\begin{equation}
L_{0}=L(d_{0},f_{0})=\frac{n}{\sqrt{2}}\left\Vert f_{0}^{2}\right\Vert .
\label{saddlepoint-functional-value}%
\end{equation}

\end{lemma}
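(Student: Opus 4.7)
My plan is to identify the saddlepoint constructively by carrying out the two inner optimizations and checking they match. For the max in $d$ side, fix $f$ and consider $L(d,f^2)=(n/\sqrt 2)\langle d,f^2\rangle$; if one drops the infinitesimality restriction and only imposes $\|d\|^2\le 1$, Cauchy--Schwarz gives the maximum $\|f^2\|$, attained uniquely at $d=f^2/\|f^2\|$. Hence, provided the eventual candidate $f_0$ satisfies $f_0^2/\|f_0^2\|\in\mathcal{D}$, one must have $d_0=f_0^2/\|f_0^2\|$. For the min in $f$ side, pass to $g=f^2$: the task is to minimize the linear functional $\langle d_0,g\rangle$ over the convex polytope $\{g\in\mathbb{R}_+^n:\sum g_j j^{2\beta}\le M,\;\rho\le\sum g_j\le 2\rho\}$.

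This linear program is handled by KKT. Assign multipliers $\mu\ge0$ to the $\Sigma$-constraint and $\lambda\ge0$ to the lower $B_\rho'$-constraint (the upper constraint $\sum g_j\le2\rho$ will not bind, as one sees from $\rho\to0$ after the fact). Stationarity reads $d_{0,j}\ge\lambda-\mu j^{2\beta}$ with equality on $\{j:g_{0,j}>0\}$. Substituting $d_{0,j}\propto f_{0,j}^2=g_{0,j}$ and requiring the shape of $d_0$ to agree with that of $g_0$ on its support forces $g_{0,j}=c(\lambda-\mu j^{2\beta})_+$ for some $c>0$; absorbing $c$ into $(\lambda,\mu)$ yields precisely the claimed Pinsker-type form. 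Complementary slackness — both the ellipsoid and the (lower) ball constraint must be active, since otherwise the common value could be improved on one side — gives the two scalar equations in \eqref{saddlepoint-shape}.

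Existence and uniqueness of positive $(\lambda,\mu)$ solving \eqref{saddlepoint-shape} is a standard monotonicity argument: parameterising by the cutoff $N=(\lambda/\mu)^{1/(2\beta)}$, the ratio $M/\rho$ pins $N$ down via the approximate identity $M/\rho\sim N^{2\beta}/(4\beta+1)$, after which either equation determines $\lambda$ and hence $\mu=\lambda N^{-2\beta}$. The main technical step I anticipate is verifying the infinitesimality requirement $\sup_j d_{0,j}^2\le \delta/(n\rho)$ with $\delta=(\log n)^{-1}$, which is what makes the unconstrained Cauchy--Schwarz step legitimate. Using the crude bound $d_{0,j}^2\le\lambda^2/\|f_0^2\|^2$, the Riemann-sum approximations $\|f_0^2\|^2\sim\lambda^2 N\int_0^1(1-t^{2\beta})^2\,dt$ and $N\asymp \rho^{-1/(2\beta)}\asymp n^{2/(4\beta+1)}$, the requirement reduces to $n\rho/N\asymp n^{-1/(4\beta+1)}=o((\log n)^{-1})$, which holds. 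Finally, the value at the saddlepoint follows by direct substitution,
\[
L_0=L(d_0,f_0^2)=\frac{n}{\sqrt 2}\left\langle \frac{f_0^2}{\|f_0^2\|},f_0^2\right\rangle=\frac{n}{\sqrt 2}\|f_0^2\|,
\]
proving \eqref{saddlepoint-functional-value}.
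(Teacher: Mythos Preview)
Your proposal is correct and follows essentially the same route as the paper: drop the infinitesimality constraint, use Cauchy--Schwarz to identify $d_0=f_0^2/\|f_0^2\|$, solve for $f_0^2$ via Lagrange multipliers on the two active constraints, and then verify $d_0\in\mathcal{D}$ by the asymptotic order computation $n\rho\sup_j d_{0,j}^2\asymp n^{-1/(4\beta+1)}$. The only organisational difference is that the paper substitutes $d(f)=f^2/\|f^2\|$ first and then minimizes the resulting quadratic $\|g\|$ directly, whereas you keep the linear program $\min_g\langle d_0,g\rangle$ and impose the consistency $d_0\propto g_0$ through the KKT conditions; the paper's reduction is slightly cleaner but the content is the same.
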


\begin{proof}
Ignore initially the restriction $\sup_{j}d_{j}^{2}\leq\delta/n\rho$ and
consider maximizing $L(d,f^{2})$ in $d$ for given $f$. Under the sole
restriction $\left\Vert d\right\Vert =1$, by Cauchy-Schwartz the solution is
found as
\[
d(f)=\frac{f^{2}}{\left\Vert f^{2}\right\Vert }.
\]
It remains to minimize $L(d(f),f)=n\left\Vert f^{2}\right\Vert /\sqrt{2}$
under the restrictions on $f^{2}.$ Setting $g_{j}=f_{j}^{2}$, one has to
minimize $\left\Vert g\right\Vert $ on the convex set
(\ref{restric-in- terms-of-f2}). This is solved using Lagrange multipliers
$\lambda,\mu$.

To show that the solution $d_{0}$ fulfills the restriction $\sup_{j}d_{j}%
^{2}\leq\delta/n\rho$, we note that
\begin{equation}
f_{0,j}^{2}=\left(  \lambda-\mu j^{2\beta}\right)  _{+}=\lambda\left(
1-\mu\lambda^{-1}j^{2\beta}\right)  _{+}\leq\lambda; \label{follows-1}%
\end{equation}
below (cf. (\ref{order-lambda}), Lemma \ref{lem-L0-value}) it is shown that
$\lambda\asymp n^{-1-1/(4\beta+1)}$ and $n\left\Vert f_{0}^{2}\right\Vert
\asymp L_{n,0}\asymp1$. This implies
\begin{align}
n\rho d_{0,n,j}^{2}  &  =n\rho\cdot O\left(  n^{2}\lambda^{2}\right)  \text{,
}\nonumber\\
n^{3}\rho\lambda^{2}  &  \asymp n\cdot n^{-4\beta/(4\beta+1)}\cdot
n^{-2/(4\beta+1)}=n^{-1/(4\beta+1)}; \label{follows-2}%
\end{align}
thus for $\delta=\left(  \log n\right)  ^{-1}$ we have that $d_{0}%
\in\mathcal{D}$ for $n$ large enough.
\end{proof}

\begin{proof}
[Proof of Lemma \ref{lem-Bayes-test}]The log-likelihood ratio is
\begin{align*}
&  \log\frac{\left(  n^{-1}\right)  ^{n/2}}{\left(  \sigma_{j}^{2}%
+n^{-1}\right)  ^{n/2}}\exp\left(  -\frac{1}{2}\sum_{j=1}^{n}\left(
\frac{Y_{j}^{2}}{\sigma_{j}^{2}+n^{-1}}-\frac{Y_{j}^{2}}{n^{-1}}\right)
\right) \\
&  =\frac{1}{2}\sum_{j=1}^{n}nY_{j}^{2}\left(  \frac{n\sigma_{j}^{2}}%
{n\sigma_{j}^{2}+1}\right)  -\frac{n}{2}\sum_{j=1}^{n}\log\left(  n\sigma
_{j}^{2}+1\right)  .
\end{align*}
This shows (a) by setting $d=\tilde{d}/\left\Vert \tilde{d}\right\Vert $ for
$\tilde{d}_{j}=\frac{n\sigma_{j}^{2}}{n\sigma_{j}^{2}+1}$. Now for $\sigma
_{j}^{2}=f_{0j}^{2}$ we have, as $\lambda\asymp n^{-1-1/(4\beta+1)}$,
\[
nf_{0j}^{2}=n\lambda\left(  1-\lambda^{-1}\mu j^{2\beta}\right)  _{+}\leq
n\lambda\asymp n\cdot n^{-1-1/(4\beta+1)}=n^{-1/(4\beta+1)}=o(1),
\]
hence $\tilde{d}_{j}\sim nf_{0j}^{2}$ uniformly over $j=1,\ldots,n$. This
implies $\left\Vert \tilde{d}\right\Vert \sim n\left\Vert f_{0}^{2}\right\Vert
\asymp n$ and
\[
d_{j}=\frac{\tilde{d}_{j}}{\left\Vert \tilde{d}\right\Vert }\asymp f_{0j}^{2}%
\]
uniformly in $j\leq n$. The proof of $n\rho d_{0,n,j}^{2}\leq\delta$ now
exactly follows (\ref{follows-1}), (\ref{follows-2}). The convergence
$t\rightarrow z_{\alpha}$ now is a consequence of Lemma
\ref{lem-quad-tests-original} (a).
\end{proof}

\begin{lemma}
\label{lem-L0-value}Suppose $\rho=c\cdot n^{-4\beta/(4\beta+1)}$, $c$
constant. Then the saddlepoint value $L_{0}$ of (\ref{value-game}) fulfills
\[
L_{0}=L(d_{0},f_{0}^{2})\sim\sqrt{A_{0}M^{-1/(2\beta)}c^{2+1/(2\beta)}/2}.
\]

\end{lemma}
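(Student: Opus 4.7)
The plan is to use the explicit description of the saddlepoint from Lemma \ref{lem-descrip-saddlepoint} and then to evaluate the sum $\|f_0^2\|^2 = \sum_{j=1}^{n}(\lambda-\mu j^{2\beta})_+^2$ asymptotically by a Riemann-sum approximation. Set $N=(\lambda/\mu)^{1/(2\beta)}$, the integer-valued cutoff after which all summands vanish, and write $f_{0,j}^{2}=\lambda(1-(j/N)^{2\beta})_+$. Then as long as $N\rightarrow\infty$ and $N\leq n$, substituting $t=j/N$ yields, for any continuous $h$ on $[0,1]$,
\[
\sum_{j=1}^{n}\lambda^{k}\bigl(1-(j/N)^{2\beta}\bigr)_+^{k}h(j/N)\sim \lambda^{k}N\int_{0}^{1}(1-t^{2\beta})^{k}h(t)\,dt.
\]

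First I would solve the two saddlepoint equations (\ref{saddlepoint-shape}) for $\lambda$ and $N$ via this approximation. Using $\int_{0}^{1}(1-t^{2\beta})\,dt=2\beta/(2\beta+1)$ and $\int_{0}^{1}t^{2\beta}(1-t^{2\beta})\,dt=2\beta/[(2\beta+1)(4\beta+1)]$, the two constraints become $\lambda N\sim (2\beta+1)\rho/(2\beta)$ and $\lambda N^{2\beta+1}\sim (2\beta+1)(4\beta+1)M/(2\beta)$. Dividing gives $N^{2\beta}\sim (4\beta+1)M/\rho$, hence
\[
N\sim\bigl((4\beta+1)M/\rho\bigr)^{1/(2\beta)},\qquad \lambda\sim \frac{2\beta+1}{2\beta}\cdot\frac{\rho^{(2\beta+1)/(2\beta)}}{((4\beta+1)M)^{1/(2\beta)}}.
\]
Under $\rho=c\cdot n^{-4\beta/(4\beta+1)}$ this gives $\lambda\asymp n^{-1-1/(4\beta+1)}$ (justifying the claims used in the preceding lemmas) and $N\asymp n^{2/(4\beta+1)}\to\infty$, so the Riemann approximation is valid.

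Next I would compute $\|f_0^2\|^2\sim \lambda^{2}N\int_{0}^{1}(1-t^{2\beta})^{2}\,dt$. A direct computation of the beta integral gives $\int_{0}^{1}(1-t^{2\beta})^{2}\,dt=8\beta^{2}/[(2\beta+1)(4\beta+1)]$. Factoring the right-hand side as $\lambda\cdot(\lambda N)\cdot 8\beta^{2}/[(2\beta+1)(4\beta+1)]$ and using $\lambda N\sim(2\beta+1)\rho/(2\beta)$, this reduces to $\|f_0^2\|^{2}\sim\lambda\rho\cdot 4\beta/(4\beta+1)$. Plugging in the formula for $\lambda$ and then $\rho=c\cdot n^{-4\beta/(4\beta+1)}$, one obtains
\[
L_0^{2}=\tfrac{n^{2}}{2}\|f_0^{2}\|^{2}\sim \frac{2\beta+1}{(4\beta+1)^{1+1/(2\beta)}}\,M^{-1/(2\beta)}\,c^{(4\beta+1)/(2\beta)},
\]
and since $(4\beta+1)/(2\beta)=2+1/(2\beta)$ and $A_{0}(\beta)=2(2\beta+1)/(4\beta+1)^{1+1/(2\beta)}$, this matches $A_0 M^{-1/(2\beta)}c^{2+1/(2\beta)}/2$.

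The only step that needs real care is controlling the error in replacing sums by integrals. I would handle this by noting that the integrand $(1-t^{2\beta})^{k}$ is bounded and uniformly continuous on $[0,1]$, so the Riemann-sum error is of order $1/N$ relative to the main term, which is negligible since $N\to\infty$. I also need to note that $N\leq n$ for large $n$ (immediate from the rates above), so the truncation at $j=n$ imposes no additional constraint on the saddlepoint problem as stated for $\mathbb{R}^n$, and the approximations above match the infinite-dimensional calculation used implicitly elsewhere.
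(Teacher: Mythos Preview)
Your proof is correct. The computations of the integrals, the asymptotic values of $N$ and $\lambda$, and the final evaluation of $L_0^2$ are all accurate, and the Riemann-sum justification is adequate since the relevant integrands are continuous on $[0,1]$ and $N\asymp n^{2/(4\beta+1)}\to\infty$ while $N\ll n$.

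The paper takes a somewhat different, more conceptual route: it introduces a bandwidth $h=(\rho/M)^{1/(2\beta)}$ and rescaled continuous profiles $d(\cdot),\sigma^{2}(\cdot)$, so that the discrete constraints and the functional $L(d,f^{2})$ converge to a fixed continuous saddlepoint problem on $L_{2}(\mathbb{R}_{+})$ whose value is exactly $\sqrt{A_{0}(\beta)}$. This renormalization picture explains why Ermakov's constant appears and connects to the continuous white-noise setting of Golubev, but the paper itself remarks that ``the above argument provides the guideline for a more rigorous proof, based on calculating the sharp asymptotics of $\lambda$ and $\mu$ directly from (\ref{saddlepoint-shape}).'' Your argument is precisely that direct calculation: it bypasses the continuous minimax reformulation and instead extracts $\lambda$, $N$, and $\|f_{0}^{2}\|^{2}$ straight from the two discrete constraint equations via Riemann approximation, in the same spirit as the computations in Section~\ref{sec:sharp-adaptation}. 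What you lose is the structural insight; what you gain is a self-contained and fully rigorous derivation.
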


\begin{proof}
The proof of Lemma \ref{lem-descrip-saddlepoint} shows that $L(d_{0},f_{0}%
^{2})$ is also the saddlepoint value under the weaker restrictions $\left\Vert
d\right\Vert ^{2}\leq1$, $f\in\Sigma(\beta,M)\cap B_{\rho}$. Let us sketch a
derivation of the asymptotics by a renormalization technique. Suppose that
$d_{j}=h^{1/2}d(hj)$, $j\leq n$ where $h$ is a bandwidth parameter tending to
$0$, and the continuous function $d:[0,\infty)\rightarrow\lbrack0,\infty)$
satisfies
\begin{equation}
\int_{0}^{\infty}d^{2}(x)\,dx\leq1. \label{constA}%
\end{equation}
Consider another continuous function $\sigma:[0,\infty)\rightarrow
\lbrack0,\infty)$ satisfying
\begin{equation}
\int_{0}^{\infty}x^{2\beta}\sigma^{2}(x)\,dx\leq1\ \text{ and }\ \int%
_{0}^{\infty}\sigma^{2}(x)\,dx\geq1 \label{constB}%
\end{equation}
and set $\sigma_{j}^{2}=Mh^{2\beta+1}\sigma^{2}(hj)$, $j\leq n$ . Choose
$h=(\rho/M)^{1/(2\beta)}$. The coefficient vector $d=\left(  d_{j}\right)
_{j=1}^{n}$ satisfies
\[
\left\Vert d\right\Vert ^{2}=h\sum_{j=1}^{n}d(hj)\rightarrow\int_{0}^{\infty
}d(x)dx\leq1.
\]
Identifying $f^{2}\in\mathbb{R}_{+}^{n}$ with $(\sigma_{j}^{2})_{j=1}^{n}$,
the restriction $f\in\Sigma(\beta,M)$ is asymptotically satisfied since
\[
\sum_{j=1}^{\infty}j^{2\beta}\sigma_{j}^{2}=Mh\sum_{j=1}^{\infty}(jh)^{2\beta
}\sigma^{2}(jh)\rightarrow M\int_{0}^{\infty}x^{2\beta}\sigma^{2}(x)\,dx\leq
M,\quad h\rightarrow0.
\]
The restriction $f\in B_{\rho}$ is also asymptotically satisfied since
\[
\sum_{j=1}^{\infty}\sigma_{j}^{2}=Mh^{2\beta+1}\sum_{j=1}^{\infty}\sigma
^{2}(jh)=\rho h\sum_{j=1}^{\infty}\sigma^{2}(jh)\sim\rho\int_{0}^{\infty
}\sigma^{2}(x)\,dx\geq\rho.
\]
Therefore,
\begin{align*}
\frac{n}{\sqrt{2}}\sum_{j=1}^{n}d_{j}\sigma_{j}^{2}  &  =\frac{n}{\sqrt{2}%
}Mh^{2\beta+1/2}h\sum_{j=1}^{\infty}d(jh)\sigma^{2}(jh)\\
&  \sim\frac{c^{1+1/(4\beta)}M^{-1/(4\beta)}}{\sqrt{2}}\int_{0}^{\infty
}d(x)\sigma^{2}(x)\,dx.
\end{align*}
The saddle point problem (\ref{value-game}) for each $n$ is thus
asymptotically expressed in terms of a fixed continuous problem with
constraints (\ref{constA}) and (\ref{constB}). There is unique positive
solution $(\lambda^{\ast},\mu^{\ast})$ for the equations (cp. \cite{Golub-82}%
),
\begin{align}
\int_{0}^{\infty}x^{2\beta}(\lambda-\mu x^{2\beta})\,dx  &  =1,\\
\int_{0}^{\infty}(\lambda-\mu x^{2\beta})\,dx  &  =1.
\end{align}
Let $\left\Vert \cdot\right\Vert _{2}$ and $\left\langle \cdot,\cdot
\right\rangle _{2}$ denote norm and scalar product in $L_{2}\left(
\mathbb{R}_{+}\right)  $. Then the saddle point $(d^{\ast},\sigma^{\ast2})$ is
given by
\begin{equation}
d^{\ast}=\frac{\sigma^{\ast2}}{\left\Vert \sigma^{\ast2}\right\Vert _{2}%
},\;\sigma^{\ast2}(x)=(\lambda^{\ast}-\mu^{\ast}x^{2\beta})_{+}.
\end{equation}
Then the value of the game is
\begin{align*}
\sup_{d\text{ in (\ref{constA})}}\,\inf_{\sigma\text{ in (\ref{constB})}%
}\left\langle d,\sigma^{2}\right\rangle _{2}  &  =\inf_{\sigma\text{ in
(\ref{constB})}}\sup_{d\text{ in (\ref{constA})}}\,\left\langle d,\sigma
^{2}\right\rangle _{2}\\
&  =\left\langle d^{\ast},\sigma^{\ast2}\right\rangle _{2}=\left\Vert
\sigma^{\ast2}\right\Vert _{2}=\sqrt{A_{0}(\beta)},
\end{align*}
where the sup is taken for $d$ satisfying (\ref{constA}), the inf is taken for
$\sigma$ satisfying (\ref{constB}), and $A_{0}(\beta)$ is Ermakov's constant
in (\ref{A-0-beta-def}). The continuous saddlepoint problem arises naturally
in a continuous Gaussian white noise setting and a parameter space described
by the continuous Fourier transformation, e.g. a Sobolev class of functions on
the whole real line (cf. \cite{Golub-82}, \cite{Golub-87}).

The above argument provides the guideline for a more rigourous proof, based on
calculating the sharp asymptotics of $\lambda$ and $\mu$ directly from
(\ref{saddlepoint-shape}). The rough order of $\lambda$ can be found as
follows. By equating $f_{0}^{2}=\sigma_{j}^{\ast2}$, we find
\begin{align*}
\left(  \lambda-\mu j^{2\beta}\right)  _{+}  &  =Mh^{2\beta+1}\sigma^{\ast
2}(hj),\\
&  =\lambda\left(  1-\left(  \left(  \mu/\lambda\right)  ^{1/2\beta}j\right)
^{2\beta}\right)  _{+}%
\end{align*}
we find $\lambda\asymp h^{2\beta+1}$ $,$ $h\asymp\left(  \mu/\lambda\right)
^{1/2\beta}$ and thus
\begin{equation}
\lambda\asymp h^{2\beta+1}\asymp\rho^{(2\beta+1)/(2\beta)}\asymp
n^{-1-1/(4\beta+1)}. \label{order-lambda}%
\end{equation}

\end{proof}

\begin{remark}
The paper of Ermakov \cite{Ermak-90}, when calculating the asymptotics of
$\lambda,\mu$ in (\ref{saddlepoint-shape}) and of $A=2L_{0}^{2}$ (in a more
general framework where $\sum a_{j}f_{j}^{2}\leq P_{0}$, $\sum b_{j}f_{j}%
^{2}\geq\rho$), contains an error for $\lambda$. Here is the correction using
the notations therein. Let $a_{j}=Lj^{2\gamma}$, $b_{j}=Mj^{2\nu}$, where
$\gamma>\nu\geq0$, $L$ and $M$ are positive constants, and set $\epsilon
=n^{-1/2}$. Then as $\epsilon\rightarrow0$ we have that
\[
\lambda\sim\frac{(2\gamma+2\nu+1)}{2(\gamma-\nu)}\left(  \frac{L}%
{P_{0}(4\gamma+1)}\right)  ^{\frac{4\nu+1}{2(\gamma-\nu)}}\left(  \frac{1}%
{M}\right)  ^{\frac{4\gamma+1}{2(\gamma-\nu)}}\left[  \rho(4\nu+1)\right]
^{\frac{2(\gamma+\nu)+1}{2(\gamma-\nu)}},
\]%
\[
\mu\sim\frac{(4\nu+1)\rho\lambda}{P_{0}(4\gamma+1)},\qquad A\sim\epsilon
^{-4}\rho\lambda\frac{4\gamma-4\nu}{4\gamma+1}.
\]

\end{remark}

\begin{tabular}
[c]{ll}%
\textsc{Department of Statistics\ \ \ \ \ \ \ \ \ \ } & \textsc{Department of
Mathematics}\\
\textsc{University of Georgia} & \textsc{Malott Hall}\\
\textsc{103 Statistics Building} & \textsc{Cornell University}\\
\textsc{101 Cedar St} & \textsc{Ithaca NY 14853}\\
\textsc{Athens GA 30605} & \textsc{e-mail:} nussbaum@math.cornell.edu\\
\textsc{e-mail:} psji@uga.edu\  &
\end{tabular}

\end{document}